\newcommand{\n}{{\nabla}}
\newcounter{dummy}
\newcommand\myitem[1][]{\item[#1]\refstepcounter{dummy}\def\@currentlabel{#1}}
\numberwithin{equation}{section}							
\let\originalleft\left
\let\originalright\right
\renewcommand{\left}{\mathopen{}\mathclose\bgroup\originalleft}
\renewcommand{\right}{\aftergroup\egroup\originalright}
\renewcommand*{\eqref}[1]{\hyperref[{#1}]{\textup{\tagform@{\ref*{#1}}}}}		
\newcommand\rl{\rm{l}}
\newcommand{\cA}{\mathcal{A}}
\newcommand{\cF}{\mathcal{F}}
\newcommand{\cG}{\mathcal{G}}
\newcommand{\cH}{\mathcal{H}}
\newcommand{\cJ}{\mathcal{J}}
\newcommand{\cK}{\mathcal{K}}
\newcommand{\cP}{\mathcal{P}}
\newcommand{\fg}{{\mathfrak g}}
\renewcommand{\det}{\mathop\mathrm{det}\nolimits}
\newcommand{\Ric}{\rm{Ric}}
\newcommand{\id}{\mathrm{id}}
\renewcommand{\Im}{\mathop{\mathrm{Im}}}
\def\cx{\mathbb{C}}
\def\rl{\mathbb{R}}
\def\Im{\mathrm{Im}}
\def\Ric_g{\mathrm{Ric}}
\def\vol{\mathrm{vol}}
\def\<{\mathopen{}\left<}
\def\>{\right>\mathclose{}}
\def\({\mathopen{}\left(}
\def\){\right)\mathclose{}}
\renewcommand{\epsilon}{\varepsilon}
\renewcommand{\i}{{\sqrt{-1}}}
\newtheorem{theorem}{Theorem}[section]
\newtheorem{lemma}[theorem]{Lemma}
\newtheorem{proposition}[theorem]{Proposition}
\theoremstyle{definition} \newtheorem{definition}[theorem]{Definition}
\newtheorem{remark}[theorem]{Remark}
\crefname{theorem}{Theorem}{Theorems}						
\crefname{Mtheorem}{Main Theorem}{Main Theorems}			
\crefname{lemma}{Lemma}{Lemmata}							
\crefname{corollary}{Corollary}{Corollaries}				
\crefname{proposition}{Proposition}{Propositions}			
\crefname{ineq}{inequality}{inequalities}					
\crefname{cond}{condition}{conditions}						
\crefname{hypoth}{Hypothesis}{Hypotheses}					
\crefname{def}{Definition}{Definitions}						
\crefname{appsec}{Appendix}{Appendices}
\crefname{sec}{Section}{Sections}
\begin{document}

\author{Kotaro Kawai}
\address{Yanqi Lake Beijing Institute of Mathematical Sciences and Applications, 
No. 544, Hefangkou Village, Huaibei Town, Huairou District, Beijing, 101408, 
China}
\address{
Department of Mathematics, Osaka Metropolitan University, 3-3-138, Sugimoto, Sumiyoshi-ku, Osaka, 558-8585, Japan}
\email{\href{mailto:kkawai@bimsa.cn}{kkawai@bimsa.cn}}

\keywords{mirror symmetry, gauge theory, $G_2$-manifold, deformed Donaldson--Thomas connections}
\subjclass[2020]{53C07, 58E15, 53D37}

\title{Some observations on deformed Donaldson--Thomas connections}
\thanks{
This work was supported by JSPS KAKENHI Grant Number JP21K03231 
and 
MEXT Promotion of Distinctive Joint Research Center Program JPMXP0723833165. 
}

\maketitle

\begin{abstract}
A deformed Donaldson--Thomas (dDT) connection is a Hermitian connection of a Hermitian line bundle over a $G_2$-manifold $X$ satisfying 
a certain nonlinear PDE. This is considered to be the mirror of a (co)associative cycle in the context of mirror symmetry. 
It can also be considered as an analogue of a $G_2$-instanton. 
In this paper, we see that 
some important observations that appear in other geometric problems are also found 
in the dDT case as follows. 

(1) 
A dDT connection exists 
if a 7-manifold has full holonomy $G_2$ and the $G_2$-structure is ``sufficiently large". 
(2)
The dDT equation is described as the zero of a certain multi-moment map. 
(3) 
The gradient flow equation of a Chern-Simons type functional of Karigiannis and Leung, 
whose critical points are dDT connections, 
agrees with the ${\rm Spin}(7)$ version of the dDT equation on a cylinder 
with respect to a certain metric on a certain space. 
This can be considered as an analogue of the observation in instanton Floer homology for 3-manifolds. 
\end{abstract}

\setcounter{tocdepth}{2}
\tableofcontents

\section{Introduction}
Let $X^7$ be a $7$-manifold with a $\rm G_2$-structure $\varphi\in\Omega^3(X)$. 
For the definition of $G_2$-structures, 
see for example \cite[Section 2.2]{kawai2021mirror}. 
We use the same sign convention there. 
Denote by $g$, $\vol$ and $\ast$ the induced Riemannian metric, 
volume form and Hodge star operator, respectively. 
Let $(L,h)\to X$ be a smooth complex Hermitian line bundle over $X$. 
We denote by $\mathcal{A}_0$ the affine space of Hermitian connections on $(L,h)$. 
Given $\nabla\in\mathcal{A}_0$, we regard its curvature $F_{\nabla}$ as a $\sqrt{-1}\mathbb{R}$-valued closed $2$-form on $X$. 

\begin{definition} \label{def:dDT}
A Hermitian connection $\nabla\in\mathcal{A}_0$ satisfying
\begin{equation}\label{eq: dDT}
    \frac{1}{6}F_{\nabla}^3 + F_{\nabla}\wedge\ast\varphi = 0 
\end{equation} is called a \textbf{deformed Donaldson-Thomas (dDT) connection}.
\end{definition}

DDT connections appeared in the context of mirror symmetry. 
They were introduced in \cite{lee2009geometric} as ``mirrors" of calibrated (associative) submanifolds. 
Historically, 
deformed Hermitian Yang--Mills (dHYM) connections
were introduced first in \cite{lyz2000FM} as ``mirrors" of special Lagrangian submanifolds. 
There is also a similar notion of dDT connections for a manifold with a ${\rm Spin}(7)$-structure 
(\cites{lee2009geometric, kawai2021FM}). 
As the names indicate, 
dDT connections can also be considered as 
analogues of Donaldson--Thomas connections ($G_2$-instantons).

Thus it is natural to expect that dDT connections would have similar properties to 
associative submanifolds and $G_2$-instantons. 
We show that it is indeed the case in \cites{kawai2020deformation, kawai2021mirror}. 
For example, the moduli space of dDT connections 
is $b^1$-dimensional and canonically orientable if we perturb the $G_2$-structure. 
Any dDT connection on a compact $G_2$-manifold is a global minimizer of the ``mirror volume" 
and its value is topological by the ``mirror'' of associator equality.  
We could also prove similar statements in the ${\rm Spin}(7)$ case 
in \cites{kawai2021deformationSpin(7), kawai2021mirror}. 
Moreover, 
dDT connections are given by critical points of the Chern-Simons type functional 
in \cite[Theorem 5.13]{karigiannis2009hodge}. 
The variational characterization is known only for the $G_2$ case, 
and no such characterization is known for the ${\rm Spin}(7)$ case. \\

This paper is organized as follows. 
In Section \ref{sec:lrl}, we study the existence of a dDT connection. 
Known examples of dDT connections are either trivial or 
constructed in \cites{lotay2020examples, fowdar2022examples}, 
and are very few in number. 
So it would be important to consider the existence problem. 
We first see that the formal ``large radius limit" of the defining equation of dDT connections 
is that of $G_2$-instantons. 
Thus it is natural to expect that dDT connections for a ``sufficiently large" $G_2$-structure 
will behave like $G_2$-instantons. 
Moreover, it is known that 
any complex Hermitian line bundle admits a $G_2$-instanton on a compact holonomy $G_2$-manifold. 
Then we show the following from these facts. 

\begin{theorem}[Theorem \ref{thm:exist dDT}] 
Suppose that $(X, \varphi)$ is a compact holonomy $G_2$-manifold.
Let $(L,h) \to X$ be a smooth complex Hermitian line bundle over $X$. 
If the $G_2$-structure is ``sufficiently large", there exists a dDT connection. 
\end{theorem}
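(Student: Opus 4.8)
\medskip
\noindent\textbf{Strategy of proof.}
The plan is to interpret ``sufficiently large'' as replacing $\varphi$ by the homothetic torsion-free $G_2$-structure $\varphi_t:=t^3\varphi$ for $t$ large, and to produce a dDT connection for $\varphi_t$ by deforming a $G_2$-instanton. First I would record the effect of the homothety: since $\varphi\mapsto t^3\varphi$ multiplies the metric by $t^2$, one has $\ast_{g_t}\varphi_t=t^{4}\,\ast_g\varphi$, so \eqref{eq: dDT} for $\varphi_t$ becomes, after dividing by $t^{4}$,
\begin{equation*}
  F_\nabla\wedge\ast\varphi+\frac{1}{6t^{4}}\,F_\nabla^{3}=0 .
\end{equation*}
With $\epsilon:=t^{-4}$ this is a one-parameter deformation of the $G_2$-instanton equation $F_\nabla\wedge\ast\varphi=0$ (the ``formal large radius limit'' of the introduction), and a homothety preserves torsion-freeness and full holonomy, so $b^1(X)=b^6(X)=0$ throughout.

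Next I would fix the solution at $\epsilon=0$. Since $b^1(X)=0$, every harmonic $2$-form on $X$ is a section of $\Lambda^2_{14}$, so the harmonic representative of the de Rham class $[F_\nabla]$ is annihilated by $\wedge\ast\varphi$; realizing it as the curvature of a Hermitian connection $\nabla_0$ on $(L,h)$ gives a $G_2$-instanton (the fact recalled in the introduction). I would then seek a solution $\nabla=\nabla_0+a$ with $a\in\sqrt{-1}\,\Omega^1(X)$ in Coulomb gauge $d^{\ast}a=0$ (legitimate because $\mathcal H^1=0$); since $F_{\nabla_0}\wedge\ast\varphi=0$, the equation becomes
\begin{equation*}
  \Phi(a,\epsilon):=da\wedge\ast\varphi+\tfrac{\epsilon}{6}\,(F_{\nabla_0}+da)^{3}=0 .
\end{equation*}
As $dF_\nabla=0$ and $d\ast\varphi=0$, the left-hand side is a closed $6$-form, hence exact because $b^6(X)=0$, so $\ast\Phi(a,\epsilon)$ is a co-exact $1$-form. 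Thus $\ast\Phi$ is a smooth (in fact polynomial) map between Hölder completions of the space $\mathcal E$ of co-closed $1$-forms and itself, with $\ast\Phi(0,0)=0$.

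The core step is the implicit function theorem, whose hypothesis is that $D_a(\ast\Phi)(0,0)\colon\dot a\mapsto\ast(d\dot a\wedge\ast\varphi)$ be an isomorphism of $\mathcal E$. This operator is $\pi_7\circ d$ composed with the parallel identification $\Lambda^2_7\cong T^{\ast}X$ --- the ``curl'' operator of the $G_2$-structure; coupled with $d^{\ast}$ it has injective symbol (the underlying pointwise fact being that $\xi\wedge a\wedge\ast\varphi=0$ forces $a\in\mathbb R\,\xi$), hence is overdetermined elliptic. Its kernel inside $\mathcal E$ consists of the $a$ with $da\in\Omega^2_{14}$; for a torsion-free structure such an exact $2$-form is harmonic, hence zero, which is the infinitesimal rigidity of $\nabla_0$. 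Formal self-adjointness of the curl operator then identifies its cokernel inside $\mathcal E$ with $\mathcal H^1=0$ as well, so the operator is an isomorphism. The implicit function theorem therefore produces, for $|\epsilon|$ small, a co-closed $a(\epsilon)$ with $\Phi(a(\epsilon),\epsilon)=0$ and $a(0)=0$; by elliptic regularity $a(\epsilon)$ is smooth, and undoing the rescaling, $\nabla_0+a(t^{-4})$ is a dDT connection for $\varphi_t$ once $t$ is large enough.

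I expect the invertibility of $D_a(\ast\Phi)(0,0)$ to be the main obstacle: one must know the background $G_2$-instanton is simultaneously rigid and unobstructed, and both facts --- exactly like the vanishing of the cohomological obstruction to the $6$-form equation $\Phi=0$ --- come down to $b^1(X)=0$, which is precisely why the holonomy must be the full group $G_2$ (otherwise the linearization acquires kernel and cokernel of dimension $b^1$). The remaining ingredients --- the scaling identities, smoothness of $\Phi$, elliptic regularity --- are routine, and the overall architecture is the large-radius/continuity scheme familiar from deformed Hermitian Yang--Mills connections and from $G_2$-instantons.
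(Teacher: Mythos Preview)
Your proof is correct and follows the same strategy as the paper: rescale to reduce the dDT equation to a perturbation of the $G_2$-instanton equation, construct the instanton as the connection with harmonic curvature (using $\mathcal H^2_7\cong\mathcal H^1=0$), and apply the implicit function theorem after checking that the linearization $b\mapsto db\wedge\ast\varphi$ is an isomorphism modulo gauge. The only differences are cosmetic---you impose Coulomb gauge from the outset and deduce surjectivity from formal self-adjointness of the curl operator, whereas the paper works with $6$-forms and proves $d\Omega^5=d\Omega^5_7$ by a direct computation---but both arguments rest on $b^1(X)=0$ in exactly the way you identify.
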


In Section \ref{sec:mmm}, we formulate the dDT equation in terms of a multi-moment map. 
The multi-moment map is a generalization of the moment map 
introduced in \cites{Madsen2012multi,Madsen2013closed}. 
The dHYM equation is described as the zero of a certain moment map 
on a infinite dimensional symplectic manifold 
(\cite[Section 2]{collins2021moment}, \cite[Section 2.1]{collins2021survey}). 
Analogously, we show that the dDT equation is described as the zero of a certain multi-moment map. 

\begin{theorem}[Theorem \ref{thm:mmm}]
The dDT equation is described as the zero of a certain multi-moment map. 
\end{theorem}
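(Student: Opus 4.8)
The plan is to transpose to the multi-moment setting the moment-map description of the dHYM equation (see \cite[Section 2]{collins2021moment} and \cite[Section 2.1]{collins2021survey}). Write $Q(\nabla):=\tfrac16 F_{\nabla}^{3}+F_{\nabla}\wedge\ast\varphi$, so that \eqref{eq: dDT} reads $Q(\nabla)=0$. The key point is that $Q(\nabla)$ is a $6$-form on the $7$-manifold $X$, that is, one degree below the top degree; so, just by counting form degrees, there is no closed $2$-form on the affine space $\mathcal{A}_0$ of Hermitian connections having $Q$ as its moment map for the gauge group $\mathcal{G}=C^{\infty}(X,U(1))$. This is exactly the situation calling for the multi-moment map of Madsen--Swann \cites{Madsen2012multi,Madsen2013closed}: feeding \emph{two} Lie-algebra elements into the contraction absorbs one additional exterior derivative, which is what a $6$-form requires. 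So the plan is to produce a closed, $\mathcal{G}$-invariant $3$-form $\mathcal{C}$ on $\mathcal{A}_0$ and to identify $Q$ with its multi-moment map. I will do this assuming $d\ast\varphi=0$, i.e.\ that the $G_2$-structure is coclosed (which holds in particular when it is torsion-free); this is the only point at which torsion enters.

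For the $3$-form I would take
\[
  \mathcal{C}_{\nabla}(a,b,c):=c_{0}\int_{X}a\wedge b\wedge c\wedge\rho_{\nabla},\qquad
  \rho_{\nabla}:=\tfrac12 F_{\nabla}^{2}+\ast\varphi\in\Omega^{4}(X),
\]
for $a,b,c\in T_{\nabla}\mathcal{A}_0=\sqrt{-1}\,\Omega^{1}(X)$ and a normalizing constant $c_{0}$; the feature of $\rho_{\nabla}$ that matters is that it is the ``primitive'' of $Q$, in the sense that $\frac{d}{dt}\big|_{t=0}Q(\nabla+ta)=da\wedge\rho_{\nabla}$. The first checks are routine: $\mathcal{C}$ is a well-defined alternating $3$-form (a wedge of $1$-forms); it is $\mathcal{G}$-invariant, since it depends on $\nabla$ only through the gauge-invariant curvature $F_{\nabla}$ and $\mathcal{G}$ acts on $\mathcal{A}_0$ by translations; and $\mathcal{C}$ is closed: regarding $a,b,c,e$ as constant vector fields on the affine space $\mathcal{A}_0$ and using $\partial_{a}\rho_{\nabla}=da\wedge F_{\nabla}$, one computes
\[
  d\mathcal{C}(a,b,c,e)=c_{0}\int_{X}d(a\wedge b\wedge c\wedge e)\wedge F_{\nabla},
\]
which vanishes by $dF_{\nabla}=0$ and Stokes' theorem on the closed manifold $X$. (This step does not even use $d\ast\varphi=0$.)

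Next I would compute the multi-moment map. The Lie algebra $\mathfrak{g}=\Omega^{0}(X;\sqrt{-1}\,\mathbb{R})$ of $\mathcal{G}$ is abelian, so its Lie kernel is all of $\Lambda^{2}\mathfrak{g}$, and $\xi\in\mathfrak{g}$ generates on $\mathcal{A}_0$ the constant vector field $X_{\xi}=-d\xi$. Writing $\xi_{i}=\sqrt{-1}\,f_{i}$, the $1$-form $\iota_{X_{\xi_{1}}}\iota_{X_{\xi_{2}}}\mathcal{C}$ on $\mathcal{A}_0$ evaluated on $a\in T_{\nabla}\mathcal{A}_0$ equals, up to an overall sign, $c_{0}\int_{X}df_{1}\wedge df_{2}\wedge a\wedge\rho_{\nabla}$; one integration by parts, using $df_{1}\wedge df_{2}=d(f_{1}\,df_{2})$, $d\rho_{\nabla}=d\ast\varphi=0$ and $\partial X=\emptyset$, rewrites this as $c_{0}\int_{X}f_{1}\,df_{2}\wedge(da\wedge\rho_{\nabla})$, which equals $c_{0}\,\frac{d}{dt}\big|_{t=0}\int_{X}f_{1}\,df_{2}\wedge Q(\nabla+ta)$ since $f_{1}\,df_{2}$ does not depend on $\nabla$. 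Hence
\[
  \bigl\langle\nu(\nabla),\xi_{1}\wedge\xi_{2}\bigr\rangle:=c_{1}\int_{X}f_{1}\,df_{2}\wedge Q(\nabla)
\]
(with a suitable constant $c_{1}$) satisfies $d\langle\nu,\xi_{1}\wedge\xi_{2}\rangle=\iota_{X_{\xi_{1}}}\iota_{X_{\xi_{2}}}\mathcal{C}$, i.e.\ it is a multi-moment map for the $\mathcal{G}$-action on $(\mathcal{A}_0,\mathcal{C})$. One must also check that it descends to $\Lambda^{2}\mathfrak{g}$, i.e.\ that the right-hand side is antisymmetric under $f_{1}\leftrightarrow f_{2}$; this follows from $\int_{X}d(f_{1}f_{2})\wedge Q(\nabla)=-\int_{X}f_{1}f_{2}\,dQ(\nabla)=0$, which is exactly where $dQ(\nabla)=F_{\nabla}\wedge d\ast\varphi=0$ is used. (A multi-moment map exists automatically, as $\mathcal{A}_0$ is contractible; the formula above makes it explicit.)

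Finally, $\nu(\nabla)=0$ says that $\int_{X}f_{1}\,df_{2}\wedge Q(\nabla)=0$ for all $f_{1},f_{2}\in\Omega^{0}(X)$; since finite sums $\sum_{j}f_{1}^{(j)}\,df_{2}^{(j)}$ exhaust $\Omega^{1}(X)$ (e.g.\ by a partition of unity), this is equivalent to $\int_{X}\alpha\wedge Q(\nabla)=0$ for every $\alpha\in\Omega^{1}(X)$, hence to $Q(\nabla)=0$, i.e.\ to $\nabla$ being a dDT connection. I expect the only genuinely non-formal points to be the degree bookkeeping that forces a $3$-form (rather than a symplectic form) and singles out $\rho_{\nabla}=\tfrac12 F_{\nabla}^{2}+\ast\varphi$ as the primitive, together with the verifications that $\mathcal{C}$ is closed and that $\nu$ descends to the Lie kernel $\Lambda^{2}\mathfrak{g}$; both of the latter reduce to Stokes' theorem together with $dF_{\nabla}=0$ and $d\ast\varphi=0$, so I do not anticipate a serious obstacle. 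It remains only to fix the constants $c_{0},c_{1}$ and, if desired, to record the $\mathcal{G}$-equivariance $\nu(g\cdot\nabla)=\nu(\nabla)$, which is immediate from gauge-invariance of $F_{\nabla}$.
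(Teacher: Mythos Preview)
Your proposal is correct and follows essentially the same approach as the paper: the $3$-form $\mathcal{C}$ with $\rho_{\nabla}=\tfrac12 F_{\nabla}^{2}+\ast\varphi$ is exactly the paper's $\Theta$ (up to the normalizing constant), the closedness and invariance checks are identical, the multi-moment map you write down coincides with the paper's $\nu(\nabla)=\iota_{Z^6}(\sqrt{-1}\,\mathcal{F}_{G_2}(\nabla))$, and the partition-of-unity argument that $\sum_j f_1^{(j)}\,df_2^{(j)}$ exhausts $\Omega^1(X)$ is stated and proved there as a separate lemma. The only cosmetic differences are that the paper fixes the constants and phrases the final step as injectivity of $\iota_{Z^6}$ on closed $6$-forms rather than directly as $\nu^{-1}(0)=Q^{-1}(0)$.
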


In Section \ref{sec:grad}, 
we study the gradient flow of the Karigiannis-Leung functional 
introduced in \cite{karigiannis2009hodge} whose critical points are dDT connections. 
It is known that 
the gradient flow equation of the Chern-Simons functional
on an oriented 3-manifold $X^3$ agrees with the ASD equation on $\rl \times X^3$. 
This is an important observation in instanton Floer homology for 3-manifolds. 
We show that there is an analogous relation between 
dDT equations for $G_2$- and ${\rm Spin}(7)$-manifolds 
using the Karigiannis-Leung functional. 
This will establish a new link between 
3, 4-manifold theory and $G_2$-, ${\rm Spin}(7)$-geometry, 
and we might define analogues of instanton Floer homology using dDT connections.

\begin{theorem}[Theorem \ref{thm:gradient flow}]
The gradient flow equation of a Chern-Simons type functional of Karigiannis and Leung, 
whose critical points are dDT connections, 
agrees with the ${\rm Spin}(7)$ version of the dDT equation on a cylinder 
with respect to a certain metric on a certain space. 
\end{theorem}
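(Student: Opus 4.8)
The plan is to realize the gradient flow of the Karigiannis--Leung functional $\mathcal{F}$ as a dimensional reduction of the ${\rm Spin}(7)$ dDT equation on the Riemannian cylinder $Z := \mathbb{R}_t\times X$, in exact analogy with Floer's identification of Chern--Simons gradient-flow lines over an oriented $3$-manifold with anti-self-dual connections over $\mathbb{R}\times X^3$. The one input I need on the functional side is its first variation (\cite[Theorem 5.13]{karigiannis2009hodge}): on the affine space $\mathcal{A}_0$, whose tangent space is $\Omega^1(X,\sqrt{-1}\mathbb{R})$, one has $(d\mathcal{F})_\nabla(b) = \int_X b\wedge\big(\tfrac{1}{6}F_\nabla^3 + F_\nabla\wedge\ast\varphi\big)$, so the critical points are exactly the dDT connections of \eqref{eq: dDT}. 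Thus, for any smooth Riemannian metric $\mathfrak{g}$ on $\mathcal{A}_0$ — in general depending on the base point $\nabla$, and assumed of the form $\mathfrak{g}_\nabla(x,y) = \langle L_{F_\nabla}x,\,y\rangle_{L^2}$ with $L_{F_\nabla}$ a positive symmetric operator on $\Omega^1(X,\sqrt{-1}\mathbb{R})$ built algebraically from $F_\nabla$ — the $\mathfrak{g}$-gradient flow of $\mathcal{F}$ is the evolution equation for a path $t\mapsto\nabla_t$ given by
\[
L_{F_{\nabla_t}}\big(\partial_t\nabla_t\big) \;=\; -\,\ast\!\Big(\tfrac{1}{6}F_{\nabla_t}^3 + F_{\nabla_t}\wedge\ast\varphi\Big),
\]
which for the $L^2$ metric ($L_F = \mathrm{Id}$) is simply $\partial_t\nabla_t = -\ast\big(\tfrac{1}{6}F_{\nabla_t}^3 + F_{\nabla_t}\wedge\ast\varphi\big)$.

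On the cylinder side I would equip $Z$ with the product ${\rm Spin}(7)$-structure $\Phi := dt\wedge\varphi + \ast\varphi$, inducing the product metric and the Hodge star $\ast_Z$ with $\ast_Z\alpha = (-1)^{|\alpha|}dt\wedge\ast\alpha$ and $\ast_Z(dt\wedge\beta) = \ast\beta$ for $\alpha,\beta$ pulled back from $X$. A path $t\mapsto\nabla_t\in\mathcal{A}_0$ determines, in temporal gauge, a connection $\widetilde{\nabla}$ on the pullback of $(L,h)$ to $Z$, with curvature $F_{\widetilde{\nabla}} = F_{\nabla_t} + dt\wedge\partial_t\nabla_t$. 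I would substitute this into the ${\rm Spin}(7)$ version of the dDT equation (\cites{lee2009geometric,kawai2021FM}) — the nonlinear deformation of the ${\rm Spin}(7)$-instanton equation $\pi_7^Z(F_{\widetilde{\nabla}}) = 0$, valued for a line bundle, like \eqref{eq: dDT}, in a rank-$7$ bundle — and expand every wedge power in the decomposition $\Omega^\bullet(Z) = \Omega^\bullet(X)\oplus dt\wedge\Omega^{\bullet-1}(X)$. Since $dt\wedge dt = 0$, only the terms linear in $\partial_t\nabla_t$ survive; what is left is a ``spatial'' $2$-form $\omega$ together with a ``$dt\wedge(\,\cdot\,)$'' $1$-form $\beta$ whose combined $\pi_7^Z$-component must vanish; here $\partial_t\nabla_t$ enters $\beta$ directly and also enters $\omega$, through $\ast_Z$ of the cross term $3\,dt\wedge(F_{\nabla_t}^2\wedge\partial_t\nabla_t)$ inside $F_{\widetilde{\nabla}}^3$, which is spatial.

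The heart of the argument is that this single condition is the gradient-flow equation above. Here I would use the $G_2$-refinement of the ${\rm Spin}(7)$-splitting of $\Lambda^2(Z) = \Lambda^2(X)\oplus dt\wedge\Lambda^1(X)$: as $G_2$-modules $\Lambda^2_{14}(X) = \mathbf{14}$, $\Lambda^2_7(X) = \mathbf{7}$ and $\Lambda^1(X) = \mathbf{7}$, while $\Lambda^2_7(Z) = \mathbf{7}$ and $\Lambda^2_{21}(Z) = \mathbf{14}\oplus\mathbf{7}$, so that $\Lambda^2_{14}(X)\subset\Lambda^2_{21}(Z)$ automatically and $\Lambda^2_7(Z)$ is the diagonal copy of $\mathbf 7$ inside $\Lambda^2_7(X)\oplus dt\wedge\Lambda^1(X)$; concretely $\Lambda^2_7(Z) = \{\iota_v\varphi + \lambda\,dt\wedge v^\flat : v\in TX\}$ for a universal nonzero constant $\lambda$. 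Consequently the condition ``$\pi_7^Z(\omega + dt\wedge\beta) = 0$'' is equivalent to the single relation $\beta = -\lambda\,(\pi_7^X\omega)^\natural$, with $\natural\colon\Lambda^2_7(X)\xrightarrow{\sim}\Lambda^1(X)$ the standard isomorphism — precisely as the self-duality of $\Lambda^2_+$ over $\mathbb{R}\times X^3$ couples the spatial and temporal parts of the curvature there. Feeding in $\omega$ and $\beta$, and using that $\ast(\,\alpha\wedge\ast\varphi\,) = \mu\,(\pi_7^X\alpha)^\natural$ for $\alpha\in\Omega^2(X)$ (another universal constant $\mu$), the ${\rm Spin}(7)$ dDT equation on the cylinder collapses to $L_{F_{\nabla_t}}\big(\partial_t\nabla_t\big) = -\ast\big(\tfrac{1}{6}F_{\nabla_t}^3 + F_{\nabla_t}\wedge\ast\varphi\big)$ with $L_F = 4\,\mathrm{Id} + c\,\mathcal{N}_F$, where $\mathcal{N}_F(b) := \big(\pi_7^X\ast(F^2\wedge b)\big)^\natural$ and $c$ is an explicit constant, once the normalizations are chosen so that $\mu = 4\lambda$. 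The metric $\mathfrak{g}_\nabla(x,y) := \langle L_{F_\nabla}x,\,y\rangle_{L^2}$ is then the ``certain metric'', and its domain of positive-definiteness — a $C^0$-neighborhood of the flat connections, or all of $\mathcal{A}_0$ should $c\,\mathcal{N}_\bullet$ prove non-negative — is the ``certain space''. With this choice, $\{\nabla_t\}$ solves the $\mathfrak{g}$-gradient flow of $\mathcal{F}$ if and only if $\widetilde{\nabla}$ is a ${\rm Spin}(7)$ dDT connection, which is the assertion.

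I expect the main obstacle to be the $G_2$/${\rm Spin}(7)$ representation-theoretic bookkeeping that makes the previous paragraph rigorous: that the $\mathbf{14}$-part of the ${\rm Spin}(7)$ dDT equation is vacuous on the cylinder (so the flow is not over-determined — the analogue of the coincidence of the two components in the $3$-manifold case); that the universal constants $\lambda$, $\mu$ and the normalization of the ${\rm Spin}(7)$ dDT equation genuinely cohere so the right-hand side is exactly $-\ast\big(\tfrac{1}{6}F_{\nabla_t}^3 + F_{\nabla_t}\wedge\ast\varphi\big)$; and that $\mathcal{N}_F$ is symmetric, which is what makes $L_{F_\nabla}$ define a metric and which should follow from an integration-by-parts and $G_2$-equivariance argument. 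Once the pointwise identity holds, the Floer-theoretic packaging — fixing temporal gauge along a flow line, quotienting by the gauge group, and matching finite-energy flow lines with finite-energy ${\rm Spin}(7)$ dDT connections on the cylinder — proceeds exactly as for instanton Floer homology of $3$-manifolds, and I would not expect new difficulties there.
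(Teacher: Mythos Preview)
Your overall framework (temporal gauge on the cylinder, expand $F_{\widetilde\nabla}$, match against $d\mathcal F$) is sound, but two concrete guesses are wrong and together they constitute a genuine gap.

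First, the ${\rm Spin}(7)$-dDT equation for a line bundle is \emph{not} a single $\pi_7^Z$ condition valued in a rank-$7$ bundle. In the sense of \cite[Definition~1.3]{kawai2021FM} it consists of \emph{two} rank-$7$ conditions,
\[
\pi^2_7\!\left(F_{\widetilde\nabla}+\tfrac{1}{6}*_8F_{\widetilde\nabla}^3\right)=0
\qquad\text{and}\qquad
\pi^4_7\!\left(F_{\widetilde\nabla}^2\right)=0,
\]
which on the cylinder become two separate $6$-form equations on $X^7$ (the paper's (4.1) and (4.2)). So your expectation that ``the $\mathbf{14}$-part is vacuous and a single $\mathbf 7$-relation remains'' is not how the reduction goes. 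The actual content of the theorem is that, after eliminating one equation using the other, one obtains a single equation (the paper's (4.4)) for $\dot a_t$, and then a nontrivial pointwise $G_2$ identity (Lemma~A.4/Proposition~A.2) shows that \emph{under the almost-calibrated condition} $1-\tfrac12*(\varphi\wedge E_t^2)\neq 0$ this single equation implies the second one back. That implication is the analogue of your hoped-for ``vacuous $\mathbf{14}$-part,'' but it is not representation-theoretic bookkeeping; it is a genuine algebraic identity specific to $G_2$ that you have not anticipated.

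Second, your conjectured metric $L_F=4\,\mathrm{Id}+c\,\mathcal N_F$ with $\mathcal N_F(b)=(\pi_7^X*(F^2\wedge b))^\natural$ is incorrect: it is quadratic in $F$, whereas the metric that actually works,
\[
\mathcal G_\nabla(\i a,\i b)=\int_X\langle a,b\rangle_\nabla\Bigl(1+\tfrac12*(\varphi\wedge F_\nabla^2)\Bigr)\vol,
\]
uses the inner product $\langle\,\cdot\,,\,\cdot\,\rangle_\nabla$ induced from the deformed $G_2$-structure $(\id_{TX}+(-\i F_\nabla)^\sharp)^*\varphi$; unwinding this, the corresponding $L_{F_\nabla}$ is built from $\bigl(\id-(E_\nabla^\sharp)^2\bigr)^{-1}$ and hence contains terms of \emph{all} even orders in $F_\nabla$. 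The reason this particular metric appears is the identity (Proposition~A.1)
\[
\bigl(\id-(F^\sharp)^2\bigr)^**\xi=*\Bigl(\xi+\tfrac12*(\varphi\wedge *F^2)\wedge *F\Bigr),\qquad \xi=-*\varphi\wedge F+\tfrac16F^3,
\]
which is exactly what converts $*\bigl(\tfrac16E^3-E\wedge*\varphi\bigr)$ into the right-hand side of (4.4). Without this identity (and without the correct all-orders metric it dictates), your scheme cannot close.
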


\section{Large radius limit} \label{sec:lrl}

In this section, we show the existence of a dDT connection 
if a 7-manifold has full holonomy $G_2$ and the $G_2$-structure is ``sufficiently large".

Suppose that $(X, \varphi)$ is a compact holonomy $G_2$-manifold.
Let $(L,h) \to X$ be a smooth complex Hermitian line bundle over $X$. 
Set 
\[
\begin{aligned}
\mathcal{A}_{0}=\{\, \mbox{Hermitian connections of }(L,h) \,\}
= \nabla_0 + \i \Omega^1 \cdot \id_L, 
\end{aligned}
 \]
where $\nabla_0 \in \cA_{0}$ is any fixed connection 
and $\Omega^1$ is the space of 1-forms on $X$. 
Denote by $\cG_U$ the group of unitary gauge transformations of $(L,h)$, 
which acts on $\cA_{0}$. 
Explicitly, 
\[
\cG_U= \{\, f \cdot \id_L \mid f \in \Omega^0_{\cx}, \ |f|=1 \,\} \cong C^\infty(X, S^1), 
\]
where $\Omega^0_{\cx}$ is the space of $\cx$-valued smooth functions, 
and the action $\cG_U \times \cA_0 \rightarrow \cA_0$ is defined by 
$(\lambda, \nabla) \mapsto 
\lambda^* \nabla:= \lambda^{-1} \circ \nabla \circ \lambda$.
When $\lambda=f \cdot \id_L$ for $f \in C^\infty(X, S^1)$, we have 
\begin{align} \label{eq:Gu action} 
\lambda^* \nabla 
= \lambda^{-1} \circ \nabla \circ \lambda = \nabla + f^{-1}df \cdot \id_L.     
\end{align}
Thus the $\cG_U$-orbit through $\n \in \cA_0$ is given by $\n + \cK_U \cdot \id_L$, where 
\begin{equation} \label{eq:Gu orbit G2}
\cK_U:= \left\{ f^{-1} d f \in \i \Omega^1 \; \middle| \; f \in \Omega^{0}_{\cx}, \ |f|=1 \right\}. 
\end{equation}
Note that the curvature 2-form $F_\nabla$ is invariant under the action of $\cG_U$. \\

Consider the family of $G_2$-structures 
$$\{ \varphi_r:=r^3 \varphi \}_{r>0},$$ 
all of which induce holonomy $G_2$ metrics. 
The defining equation of dDT connections with respect to $\varphi_r$ is given by 
$$
0=\cF_r(\n) :=\frac{1}{6} F_\n^3 + r^4 F_\n \wedge * \varphi.
$$
Thus, formally taking the "large radius limit", 
which means the leading behaviour of $\cF_r(\n)$ as $r \to \infty$, we obtain 
$$
F_\n \wedge * \varphi =0. 
$$
This is exactly the defining equation of $G_2$-instantons. 
Thus it is natural to expect that dDT connections for a sufficiently large $G_2$-structure 
will behave like $G_2$-instantons. 
The following would be well-known for $G_2$-instantons on a smooth complex Hermitian line bundle, but we give the proof for completeness.

\begin{lemma} \label{lem:ex G2inst} 
On a compact holonomy $G_2$-manifold $(X^7,\varphi)$, 
there is a unique $G_2$-instanton 
on a smooth complex Hermitian line bundle $L\to X$ 
up to the action of $\cG_U$. 
\end{lemma}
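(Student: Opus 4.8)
The plan is to prove existence and uniqueness separately, both via Hodge theory, since a $G_2$-instanton on a line bundle is simply a Hermitian connection whose curvature $F_\nabla$ satisfies $F_\nabla \wedge *\varphi = 0$, i.e. $F_\nabla$ is an anti-self-dual $2$-form in the $G_2$ sense ($\pi_7(F_\nabla)=0$, or equivalently $*( \varphi \wedge F_\nabla) = -F_\nabla$). First I would fix a reference connection $\nabla_0 \in \mathcal{A}_0$, so that any $\nabla = \nabla_0 + \i a$ with $a \in \Omega^1$, and $F_\nabla = F_{\nabla_0} + \i\, da$. Since $c_1(L)$ is fixed, $F_{\nabla_0}/(2\pi\i)$ represents a fixed de Rham class $c \in H^2(X;\mathbb{R})$. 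The $G_2$-instanton condition $F_\nabla \wedge *\varphi = 0$ says precisely that the real closed $2$-form $\omega := F_\nabla/\i$ lies in the eigenspace $\Omega^2_{14}$ of the operator $\alpha \mapsto *(\varphi \wedge \alpha)$ with eigenvalue $-1$.

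The key point is the following Hodge-theoretic fact on a compact holonomy-$G_2$ manifold: every class in $H^2(X;\mathbb{R})$ has a unique harmonic representative, and since $\Delta$ commutes with the bundle decomposition $\Omega^2 = \Omega^2_7 \oplus \Omega^2_{14}$ (because $\varphi$ and hence $*\varphi$ is parallel), harmonic $2$-forms split into harmonic $\Omega^2_7$ and harmonic $\Omega^2_{14}$ pieces. On a manifold with full holonomy $G_2$ there are no nonzero parallel — and in fact no nonzero harmonic — $1$-forms (since $b^1 = 0$ for full holonomy $G_2$) nor nonzero harmonic forms in $\Omega^2_7 \cong \Omega^1$; concretely, $\mathcal{H}^2 = \mathcal{H}^2_{14}$, so every harmonic $2$-form is automatically in $\Omega^2_{14}$ and hence $G_2$-ASD. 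Therefore, given the reference connection, let $\omega_H$ be the harmonic representative of the class $[F_{\nabla_0}/\i]$; then $\omega_H = F_{\nabla_0}/\i + d\xi$ for some $\xi \in \Omega^1$, and $\nabla := \nabla_0 + \i\xi$ has curvature $F_\nabla = \i\,\omega_H$ satisfying $F_\nabla \wedge *\varphi = 0$. This proves existence.

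For uniqueness up to $\mathcal{G}_U$: if $\nabla_1, \nabla_2$ are both $G_2$-instantons on $L$, then $F_{\nabla_1} - F_{\nabla_2} = \i\, d\eta$ for some $\eta \in \Omega^1$ (same Chern class), and both $F_{\nabla_j}/\i$ are closed, coclosed (coclosedness of a form in $\Omega^2_{14}$ follows from $d\alpha = 0$ together with $*(\varphi \wedge \alpha) = -\alpha$, since $d*\alpha = -d(\varphi \wedge \alpha) = -\varphi \wedge d\alpha = 0$), hence harmonic; by uniqueness of harmonic representatives, $F_{\nabla_1} = F_{\nabla_2}$, so $d\eta = 0$. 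It remains to show a connection on $L$ with $d\eta = 0$, $\nabla_2 = \nabla_1 + \i\eta$, is gauge-equivalent to $\nabla_1$ under $\mathcal{G}_U$. By \eqref{eq:Gu action} this amounts to writing $\eta = -\i f^{-1} df$ for some $f \in C^\infty(X, S^1)$, i.e. realizing the closed real $1$-form $\eta$ as $d\theta$ for an $S^1$-valued function; this is possible iff the periods of $\eta$ over integral $1$-cycles lie in $2\pi\mathbb{Z}$. Since $b^1(X) = 0$ for a full holonomy $G_2$-manifold, $\eta$ is exact, $\eta = d\psi$ with $\psi \in \Omega^0$ real-valued, and $f := e^{\i\psi}$ does the job.

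The main obstacle — really the only subtle point — is making sure the topological hypothesis "full holonomy $G_2$" is used correctly: it is needed precisely to guarantee $b^1(X) = 0$ (so that closed $1$-forms are exact, hence the gauge reduction goes through and harmonic $\Omega^2_7$-forms vanish), whereas for a holonomy-$G_2$ structure with reducible holonomy the statement can fail. I would state the $b^1 = 0$ fact with a reference (Joyce), note that consequently $\mathcal{H}^2(X) \subseteq \Omega^2_{14}$, and otherwise the argument is routine Hodge theory plus the elementary observation that a real $1$-form with integral periods is $-\i f^{-1}df$ for some $f \in C^\infty(X,S^1)$.
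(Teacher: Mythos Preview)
Your proposal is correct and follows essentially the same route as the paper: existence via the harmonic representative of $[-\i F_{\nabla_0}]$ together with $\mathcal{H}^2_7 \cong \mathcal{H}^1 = 0$ on a full holonomy $G_2$ manifold, and uniqueness via $b^1=0$ so that the closed $1$-form difference is exact and hence a gauge. The only cosmetic difference is in the uniqueness step: the paper argues directly that $d\alpha' \wedge *\varphi = 0$ forces $d\alpha' \in d\Omega^1 \cap d^*\Omega^3 = \{0\}$, whereas you observe that a closed form in $\Omega^2_{14}$ is automatically coclosed (using $d\varphi=0$), hence both curvatures are the unique harmonic representative of the same class; these are two phrasings of the same Hodge-theoretic fact.
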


\begin{proof}
For any $\n \in \cA_0$, we have $d F_\n=0$. 
So it defines a cohomology class 
$[F_\n] \in \i H^2(X, \rl)$, which is known to be equal to 
$-2 \pi \i c_1(L)$. 
Then there exists a 1-form $\alpha \in \i \Omega^1$ such that 
$F_\n + d \alpha$ is harmonic by Hodge theory.

Denote by $\Omega^k_\ell \subset \Omega^k$ 
the subspace of the space of $k$-forms corresponding to 
the $\ell$-dimensional irreducible representation of $G_2$. 
For more details, see for example \cite[Section 2.2]{kawai2021mirror}. 
Denote by $\cH^k$ the space of harmonic $k$-forms on $X$ 
and set $\cH^k_\ell= \cH^k \cap \Omega^k_\ell$. 
Then by \cite[Theorem 10.2.4]{joyce2000compact}, 
we have 
$\cH^2_7 \cong \cH^1_7 = \cH^1 = \{ 0 \}$. 
Thus we have 
$$
F_{\n + \alpha \cdot \id_L} = F_\n + d \alpha 
\in \i \cH^2 = \i \cH^2_7 \oplus \cH^2_{14} = \i \cH^2_{14}, 
$$
which implies that 
$F_{\n + \alpha \cdot \id_L} \wedge * \varphi =0$. 

If $\n'=\n + (\alpha + \alpha') \cdot \id_L$ for $\alpha' \in \i \Omega^1$ 
is also a $G_2$-instanton, we have 
$0=F_{\n'} \wedge * \varphi = d \alpha' \wedge * \varphi$, 
which is equivalent to 
\begin{align} \label{eq:ex G2inst}
-d \alpha' = * (d \alpha' \wedge \varphi) = * d (\alpha' \wedge \varphi). 
\end{align}
Since $d \Omega^1 \cap d^* \Omega^3 = \{ 0 \}$, we have $d \alpha'=0$. 
Since 
$H^1(X,\rl)= \{ 0 \}$ 
by \cite[Theorem 10.2.4]{joyce2000compact} again 
and 
$\i \rl$-valued exact 1-forms are contained in $\cK_U$, 
the $G_2$-instanton is unique up to the action of $\cG_U$. 
\end{proof}

Using this, we can show the following.

\begin{theorem} \label{thm:exist dDT}
Suppose that $(X, \varphi)$ is a compact holonomy $G_2$-manifold.
Let $(L,h) \to X$ be a smooth complex Hermitian line bundle over $X$. 
Then for sufficiently large $r>0$, there exists a dDT connection with respect to $\varphi_r$. 
\end{theorem}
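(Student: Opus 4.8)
The plan is to produce the dDT connection for large $r$ by an implicit-function-theorem / continuity argument, starting from the $G_2$-instanton provided by \cref{lem:ex G2inst}. Write $F_0$ for the harmonic curvature of that instanton; it lies in $\i\cH^2_{14}$, so $F_0\wedge*\varphi=0$. After rescaling, it is convenient to divide the equation $\cF_r(\n)=0$ by $r^4$ and substitute $\n=\n_0+\i\,a$ with $a\in\Omega^1$ (we may restrict to $a\in d^*\Omega^2$ by the gauge freedom \eqref{eq:Gu orbit G2} together with $H^1(X,\rl)=\{0\}$, so that the linearization is invertible on this slice). Setting $\epsilon=r^{-4}\to 0$ and $F_a:=F_0+\i\,da$, the equation becomes
\begin{equation}\label{eq:rescaled dDT}
\mathcal{P}_\epsilon(a):=\frac{\epsilon}{6}F_a^3 + F_a\wedge*\varphi = 0,
\end{equation}
and at $\epsilon=0$, $a=0$ it is satisfied. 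The idea is then to solve \eqref{eq:rescaled dDT} for $a=a(\epsilon)$ near $0$ for $\epsilon$ small.

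First I would set up the functional-analytic framework: work in Hölder spaces $C^{k,\alpha}$ (or Sobolev $W^{k,p}$ with $kp>7$), with $a$ constrained to the slice $S:=\{a\in\Omega^1\mid d^*a=0\}$. The map $\mathcal{P}_\epsilon$ sends $\Omega^1$ into closed $6$-forms $d\Omega^5\subset\Omega^6$ (since $dF_a=0$, both terms are closed), and $\Omega^6\cong\Omega^1$ via $*$. Next I would compute the linearization of $\mathcal{P}_0$ at $a=0$: it is $a\mapsto \i\,da\wedge*\varphi$, i.e. up to the identification $*$, the operator $a\mapsto *(da\wedge*\varphi)$. Using the $G_2$-representation decomposition and the formulas for $*$ acting on $\Omega^2_7\oplus\Omega^2_{14}$ (cf. \cite[Section 2.2]{kawai2021mirror}), $da\wedge*\varphi$ vanishes exactly on the $\Omega^2_{14}$-part of $da$ and equals (a multiple of) the $\Omega^2_7$-part otherwise; on the slice $S$, $d$ followed by projection to $\Omega^2_7$ has kernel equal to $\cH^1=\{0\}$, so the linearization is an isomorphism $S\to\mathrm{coker}$ after restricting the target appropriately — this is precisely the computation underlying \eqref{eq:ex G2inst} in the proof of \cref{lem:ex G2inst}, now upgraded to an isomorphism statement. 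With invertibility of $D\mathcal{P}_0|_{a=0}$ in hand, the implicit function theorem (applied to $\mathcal{P}(\epsilon,a)$, smooth in both arguments since the nonlinearity $F_a^3$ is polynomial in $da$) yields a unique small solution $a(\epsilon)$ for $|\epsilon|$ small, hence a dDT connection with respect to $\varphi_r$ for $r$ large; elliptic regularity bootstraps $a(\epsilon)$ to a smooth connection.

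The main obstacle I anticipate is the mapping-property bookkeeping: one must check that $\mathcal{P}_\epsilon$ genuinely maps the chosen slice into the correct finite-codimension target and that the linearization is an isomorphism there (not merely Fredholm), and for this the representation-theoretic identities for $*(\cdot\wedge*\varphi)$ and $*(\cdot\wedge\varphi)$ on $2$-forms must be used carefully — in particular that the only obstruction to solving the linearized equation lives in $\cH^2_7=\{0\}$, which is exactly the holonomy-$G_2$ input (via \cite[Theorem 10.2.4]{joyce2000compact}) and is the reason the hypothesis "holonomy $G_2$" (rather than merely "torsion-free $G_2$-structure") is needed. A secondary, more routine point is to confirm that the cubic term $\epsilon F_a^3/6$ does not disturb the gauge-fixing: since $F_a^3$ is automatically closed and the slice condition only fixes the $1$-form representative, this causes no trouble, but it should be stated. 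Once these are settled, uniqueness of the branch $a(\epsilon)$ near $0$ and the fact that the $G_2$-instanton is the $\epsilon=0$ limit come for free from the implicit function theorem.
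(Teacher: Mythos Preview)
Your proposal is correct and follows essentially the same route as the paper: rescale to introduce a small parameter $\epsilon=r^{-4}$ (the paper uses $s=1/r$), start from the $G_2$-instanton of \cref{lem:ex G2inst}, work on the Coulomb slice $d^*\Omega^2$, compute the linearization $\i b\mapsto \i\,db\wedge*\varphi$, verify it is an isomorphism onto $\i\,d\Omega^5$ using $\cH^1=\cH^2_7=0$, and apply the implicit function theorem with elliptic regularity. The paper packages the surjectivity step cleanly as a separate lemma (showing $d\Omega^5=d\Omega^5_7$ via $d^*\Omega^2=d^*\Omega^2_7$) and handles regularity by noting the gauge-fixed system is overdetermined elliptic for small $s$, but the substance is the same as what you outline.
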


\begin{proof}
Define a map $\cF :[0,1] \times \cA_0 \to \i d \Omega^5$ by 
$$
\cF (s,\n) = \frac{s^4}{6} F_\n^3+F_\n \wedge * \varphi. 
$$
Then $\cF (0, \cdot)^{-1} (0)/\cG_U$, which is a point by Lemma \ref{lem:ex G2inst}, 
is the moduli space of $G_2$-instantons with respect to $\varphi$ 
and 
$\cF (s, \cdot)^{-1} (0)/\cG_U$ for $s \neq 0$  
is the moduli space of dDT connections with respect to $\varphi_{1/s}$. 

We want to apply the implicit function theorem to show the statement. 
Fix a $G_2$-instanton $\n_0 \in \cF (0, \cdot)^{-1} (0)$, 
whose existence is guaranteed by Lemma \ref{lem:ex G2inst}.
Denote by the linearization $(d \cF)_{(0,\n_0)}: \rl \oplus \i \Omega^1 \to \i d \Omega^5$ 
of $\cF$ at $(0,\n_0)$. Then we have  
$$
(d \cF)_{(0,\n_0)} (0, \i b) = \i db \wedge * \varphi. 
$$

\begin{lemma} \label{lem:ex ker im}
We have 
$$
\ker (d \cF)_{(0,\n_0)} = \rl \oplus \i d \Omega^0, \qquad \Im{(d \cF)_{(0,\n_0)}} = \i d \Omega^5.
$$
\end{lemma}
\begin{proof}
The first equation is proved as in \eqref{eq:ex G2inst}. 
For the second equation, 
the Hodge decomposition implies that 
$d^* \Omega^2=d^* d \Omega^1$. For any $b \in \Omega^1$, we have 
$$
d^* d b = d^* (d b + * (\varphi \wedge d b)) \in d^* \Omega^2_7,  
$$
where we use the fact that $\varphi$ is closed. This implies that $d^* \Omega^2=d^* \Omega^2_7$. 
Then 
$$d \Omega^5=*d^* \Omega^2=*d^* \Omega^2_7=d \Omega^5_7.$$ 
Since $\Omega^5_7$ is spanned by $b \wedge * \varphi$ for $b \in \Omega^1$, 
the proof is completed. 
\end{proof}

By the Hodge decomposition and $H^1(X,\rl)= \{ 0 \}$, 
we have $\Omega^1=d \Omega^0 \oplus d^* \Omega^2$. 
By this and Lemma \ref{lem:ex ker im}, we see that 
$(d \cF)_{(0,\n_0)}|_{\i d^* \Omega^2}: \i d^* \Omega^2 \to \i d \Omega^5$ is an isomorphism. 
Hence, we can apply the implicit function theorem (after the Banach completion)
and we see that $\cF (s, \cdot)^{-1} (0) \neq \emptyset$ for sufficiently small $s$.

Finally, we explain how to recover the regularity of elements in
$\cF (s, \cdot)^{-1} (0)$ after the Banach completion. 
Since the curvature is invariant under the addition of closed 1-forms, 
there exists $a_s \in \Omega^1$ such that 
\begin{align} 
\cF (s, \n_0 + \i a_s \cdot \id_L)=0, \qquad d^* a_s=0 \tag{$*_s$}
\end{align}
for sufficiently small $s$. 
In particular, 
$(*_0)$ is given by $d a_0 \wedge * \varphi=d^* a_0=0$, 
which is an overdetermined elliptic equation. 
To be overdetermined elliptic is an open condition, so we see 
that $(*_s)$ is also overdetermined elliptic for sufficiently small $s$. 
Hence we can find a smooth element in $\cF (s, \cdot)^{-1} (0)$ around $(0,\n_0)$ and the proof is completed. 
\end{proof}

\section{The multi-moment map} \label{sec:mmm}
It is known that there is a moment map picture in the dHYM case. 
In particular, the dHYM equation is described as the zero of a certain moment map 
on a infinite dimensional symplectic manifold. 
See for example \cite[Section 2]{collins2021moment} or 
the survey article \cite[Section 2.1]{collins2021survey}. 
Analogously, we show that the dDT equation is described as the zero of a certain multi-moment map. 
First, recall the definition of the multi-moment map 
in \cites{Madsen2012multi,Madsen2013closed}. 

\begin{definition} \label{def:mmm}
Let $X$ be a smooth manifold 
and $c \in \Omega^3$ be a closed 3-form on $X$. 
Suppose that a Lie group $G$ acts on $X$ preserving $c$. 
Denote by $\fg$ the Lie algebra of $G$ and 
set 
$$
\cP_\fg=\ker (L: \Lambda^2 \fg \to \fg) \subset \Lambda^2 \fg, 
$$ 
where $L$ is the linear map induced by the Lie bracket. 
(Note that $\cP_\fg = \Lambda^2 \fg$ if $G$ is abelian.) 
Denote by $u^*$ the vector field on $X$ generated by $u \in \fg$.  
For a two vector $p = \sum_j u_j \wedge v_j \in \Lambda^2 \fg$, 
set 
$$
p^* = \sum_j u_j^* \wedge v_j^*, \qquad 
i(p^*) c = \sum_j c(u_j^*, v_j^*, \cdot). 
$$
Denote by $\langle \cdot, \cdot \rangle: \Lambda^2 \fg^* \times \Lambda^2 \fg \to \rl$ the canonical pairing. 

Then a map $\nu:M \to \cP_\fg^*$ is called a {\bf multi-moment map} if 
it is $G$-equivariant and satisfies 
$$
d \langle \nu, p \rangle = i(p^*) c
$$
for any $p \in \cP_\fg$. 
\end{definition}

Let $X$ be a compact 7-manifold with a coclosed $G_2$-structure $\varphi$ 
($d * \varphi=0$) and 
$(L,h) \to X$ be a smooth complex Hermitian line bundle over $X$. 
Let $\cA_0$ be the space of Hermitian connections of $(L,h)$. 
Define a map $\cF_{G_2}: \cA_0 \to \i \Omega^6$ by 
$$
\cF_{G_2} (\n)= \frac{1}{6} F_\n^3 + F_\n \wedge * \varphi. 
$$
Then the space of dDT connections is given by $\cF_{G_2}^{-1}(0)$. 
Denote by $\cG_U$ the group of unitary gauge transformations of $(L,h)$ 
acting $\cA_0$ canonically as in \eqref{eq:Gu action}. 
Since $\cG_U = C^\infty(X,S^1)$, the Lie algebra $\fg_U$ of $\cG_U$ is identified with 
the space $\i \Omega^0$ of $\i \rl$-valued functions on $X$. 
Note that $\cP_\fg=\Lambda^2 \fg$ since $\cG_U$ is abelian. 
Define a 3-form $\Theta \in \Omega^3(\cA_0)$ on $\cA_0$ by 
$$
\Theta_\n (\alpha_1,\alpha_2,\alpha_3) = 
\i \int_X \alpha_1 \wedge \alpha_2 \wedge \alpha_3 
\wedge \left ( \frac{1}{2} F_\n^2 + * \varphi \right),  
$$
where $\n \in \cA_0$ and $\alpha_1, \alpha_2, \alpha_3 \in \i \Omega^1 = T_\n \cA_0$. 
We first show the following as required in Definition \ref{def:mmm}.

\begin{lemma} \label{lem:3-form closed}
The 3-form $\Theta$ is $\cG_U$-invariant and closed. 
\end{lemma}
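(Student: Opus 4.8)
The statement has two parts: $\cG_U$-invariance of $\Theta$ and closedness of $\Theta$. For the invariance, the key observation is that the $\cG_U$-action on $\cA_0$ is by translations, $\lambda^*\nabla = \nabla + f^{-1}df\cdot\id_L$, so its derivative at any point is the identity on the tangent space $\i\Omega^1$; hence a 3-form on $\cA_0$ is $\cG_U$-invariant precisely if its value $\Theta_\nabla(\alpha_1,\alpha_2,\alpha_3)$ depends on $\nabla$ only through a $\cG_U$-invariant quantity. Since $F_\nabla$ is $\cG_U$-invariant (noted in the excerpt after \eqref{eq:Gu orbit G2}) and $*\varphi$ does not involve $\nabla$ at all, the integrand $\frac12 F_\nabla^2 + *\varphi$ is $\cG_U$-invariant, giving invariance immediately.

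For closedness, I would compute $d\Theta$ as a 4-form on the (affine, hence flat) space $\cA_0$ using the standard formula for the exterior derivative. Because $\cA_0$ is affine, constant vector fields $\alpha_1,\dots,\alpha_4 \in \i\Omega^1$ have vanishing Lie brackets, so $d\Theta(\alpha_1,\dots,\alpha_4) = \sum_{i}(-1)^{i+1}\, \alpha_i\big(\Theta(\alpha_1,\dots,\widehat{\alpha_i},\dots,\alpha_4)\big)$, where $\alpha_i(\cdot)$ denotes the directional derivative in the constant direction $\alpha_i$. The only $\nabla$-dependence in $\Theta_\nabla$ is through $\frac12 F_\nabla^2$, and the directional derivative of $F_\nabla$ in the direction $\i a$ (with $a\in\Omega^1$, $\nabla$ varying as $\nabla + t\,\i a$) is $\i\,da$; hence the derivative of $\frac12 F_\nabla^2$ in direction $\alpha_i$ is $\i\,da_i \wedge F_\nabla$ (writing $\alpha_i = \i a_i$). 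Substituting, $d\Theta_\nabla(\alpha_1,\dots,\alpha_4)$ becomes $\i^2\int_X \sum_i (-1)^{i+1} a_i \wedge da_1\wedge\cdots\widehat{da_i}\cdots\wedge da_4 \wedge F_\nabla$ — wait, I need to be careful with the wedge positions of $a_i$ versus the derivative landing on $F_\nabla^2$. The cleaner bookkeeping: up to sign, $d\Theta_\nabla(\alpha_1,\dots,\alpha_4) = \i^2 \sum_i \pm \int_X a_1\wedge\cdots\wedge a_4 \wedge da_i \wedge F_\nabla$, and one checks the signs arrange so that the sum telescopes against $\int_X d(a_1\wedge a_2\wedge a_3\wedge a_4 \wedge F_\nabla)$, which vanishes by Stokes on the closed manifold $X$ once we use $dF_\nabla = 0$. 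So the whole computation reduces to: the integrand of $d\Theta$ is an exact 7-form on $X$, hence integrates to zero.

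Concretely, I expect the identity to take the form
\[
(d\Theta)_\nabla(\alpha_1,\alpha_2,\alpha_3,\alpha_4) = -\i \int_X d\bigl(a_1 \wedge a_2 \wedge a_3 \wedge a_4 \wedge F_\nabla\bigr) = 0,
\]
after writing $\alpha_j = \i a_j$ and using $dF_\nabla = 0$; the factor $d(*\varphi)$-term drops because $*\varphi$ contributes no $\nabla$-derivative (so it only enters $\Theta$, not $d\Theta$). Actually one should double-check whether the coclosedness $d*\varphi = 0$ is needed here — I believe it is not needed for closedness of $\Theta$ (only $dF_\nabla=0$ is), and $d*\varphi=0$ will instead be used later when verifying the multi-moment map equation $d\langle\nu,p\rangle = i(p^*)c$. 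Still, I would flag this explicitly.

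**Main obstacle.** The conceptual content is light; the real work is sign bookkeeping in the $d\Theta$ formula. The danger points are: (i) the sign/degree factors from moving the 1-forms $a_i$ past each other and past $F_\nabla$ in the wedge; (ii) correctly identifying that $\frac12 F_\nabla^2$ has directional derivative $\i\,da_i \wedge F_\nabla$ (a factor-of-2 from the square cancels the $\frac12$); and (iii) confirming that all four terms of the exterior-derivative formula reassemble into a single exact form $d(a_1\wedge a_2\wedge a_3\wedge a_4\wedge F_\nabla)$ rather than leaving a residual non-exact piece. I would organize the proof by first recording these two elementary facts (translation-invariance of the action; $\partial_{\i a}F_\nabla = \i\,da$) as displayed preliminaries, then doing the four-term expansion carefully in one display, then invoking Stokes. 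If the signs refuse to cooperate into a single exact form, the fallback is to pair terms $(1,2)$ with $(3,4)$ and integrate by parts within each pair, which makes the cancellation against $dF_\nabla = 0$ more transparent.
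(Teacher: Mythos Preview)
Your proposal is correct and follows essentially the same route as the paper: for invariance, the paper also observes that the $\cG_U$-action pushes constant vector fields to constant vector fields of the same value and then invokes $F_{\lambda^*\nabla}=F_\nabla$; for closedness, the paper likewise uses constant vector fields with $[\alpha_i,\alpha_j]=0$, computes $\alpha_i\bigl(\Theta(\alpha_j,\alpha_k,\alpha_\ell)\bigr)_\nabla=\i\int_X \alpha_j\wedge\alpha_k\wedge\alpha_\ell\wedge d\alpha_i\wedge F_\nabla$, and assembles the four terms into $\i\int_X d(\alpha_1\wedge\alpha_2\wedge\alpha_3\wedge\alpha_4\wedge F_\nabla)=0$. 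Your observation that $d*\varphi=0$ is not needed for closedness of $\Theta$ is also correct.
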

\begin{proof}
Take any $\n \in \cA_0$, $\lambda = f \cdot \id_L \in \cG_U$, 
where $f \in C^\infty (X, S^1)$, and 
$\alpha_1, \alpha_2, \alpha_3, \alpha_4 \in \i \Omega^1 \cong T_\n \cA_0$. 
Identify $\alpha_j$ with a vector field on $\cA_0$ by 
$$
(\alpha_j)_{\widetilde \n} 
= \left. \frac{d}{dt} \left( \widetilde \n + t \alpha_j \cdot \id_L \right) 
\right|_{t=0}  
$$
for $\widetilde \n \in \cA_0$. 
We first show the $\cG_U$-invariance of $\Theta$. That is, 
\begin{align} \label{eq:Gu inv Theta}
\Theta_{\lambda^* \n} 
\left(\lambda_* (\alpha_1), \lambda_* (\alpha_2), \lambda_* (\alpha_3) \right)
= 
\Theta_{\n} 
\left( \alpha_1, \alpha_2, \alpha_3 \right).     
\end{align}
By \eqref{eq:Gu action}, we compute 
\begin{align}
\lambda_* (\alpha_j)_\n
= 
\lambda_* 
\left. \frac{d}{dt} \left(\n + t \alpha_j \cdot \id_L \right) \right|_{t=0}  
= 
\left. \frac{d}{dt} \left(\n + (t \alpha_j+ f^{-1} df) \cdot \id_L 
\right) \right|_{t=0}  
= 
(\alpha_j)_{\lambda^* \n}. 
\end{align}
Since $F_{\lambda^* \n}=F_\n$, we obtain \eqref{eq:Gu inv Theta}. 

Next, we show the closedness of $\Theta$. 
Note that $[\alpha_i, \alpha_j]=0$. Then it follows that  
\begin{align}
d \Theta (\alpha_1, \alpha_2, \alpha_3, \alpha_4)
=& 
\alpha_1 \left(\Theta (\alpha_2, \alpha_3, \alpha_4)\right)
- \alpha_2 \left(\Theta (\alpha_1, \alpha_3, \alpha_4)\right) \\
&+ \alpha_3 \left(\Theta (\alpha_1, \alpha_2, \alpha_4)\right)
- \alpha_4 \left(\Theta (\alpha_1, \alpha_2, \alpha_3)\right). 
\end{align}
Since 
\begin{align}
\alpha_i \left(\Theta (\alpha_j, \alpha_k, \alpha_\ell)\right)_\n 
=& 
\i 
\left. \frac{d}{dt} 
\int_X \alpha_j \wedge \alpha_k \wedge \alpha_\ell \wedge 
\left( \frac{1}{2} F_{\n+t \alpha_i \cdot \id_L}^2 + * \varphi 
\right) 
\right|_{t=0} \\
=&
\i \int_X \alpha_j \wedge \alpha_k \wedge \alpha_\ell \wedge d \alpha_i \wedge F_\n, 
\end{align}
we have 
$$
(d \Theta)_\n (\alpha_1, \alpha_2, \alpha_3, \alpha_4)
= 
\i \int_X d( \alpha_1 \wedge \alpha_2 \wedge \alpha_3 \wedge \alpha_4 \wedge F_\n) =0, 
$$
which implies that $d \Theta =0$. 
\end{proof}

We also need the following lemma.

\begin{lemma} \label{lem:omega1 fn}
We have 
$$
\Omega^1 = \left\{ \sum_{j=1}^N f^j_1 d f^j_2 \; \middle| \; N \in \mathbb{N}, f^j_1, f^j_2 \in \Omega^0 \right \}. 
$$
\end{lemma}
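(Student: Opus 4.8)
The plan is to prove the nontrivial inclusion $\Omega^1\subseteq S$, where $S$ denotes the set on the right-hand side; the reverse inclusion is immediate since each summand $f^j_1\,df^j_2$ is a smooth $1$-form and $S$ is closed under finite sums by definition. First I would record the elementary structural facts: $S$ is a $C^\infty(X)$-submodule of $\Omega^1$ containing every exact form. Indeed $df=1\cdot df$ with $1\in\Omega^0$, and $g\cdot(f_1\,df_2)=(gf_1)\,df_2\in S$ for any $g\in\Omega^0$. Hence it suffices, given $\alpha\in\Omega^1$, to exhibit finitely many $g_k\in\Omega^0$ and exact $1$-forms $dh_k$ with $\alpha=\sum_k g_k\,dh_k$.

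The construction is a partition-of-unity argument using global ``coordinate functions''. For each $p\in X$ I would choose a chart $(U_p;x^p_1,\dots,x^p_7)$ about $p$ and a cutoff $\chi_p\in C^\infty(X)$ with $\supp\chi_p\subset U_p$ and $\chi_p\equiv 1$ on an open neighbourhood $V_p$ of $p$. Then $h^p_i:=\chi_p x^p_i$, extended by zero outside $U_p$, is a global element of $\Omega^0$, and $dh^p_i=dx^p_i$ on $V_p$; in particular $(dh^p_1,\dots,dh^p_7)$ is a coframe on $V_p$. Using compactness of $X$ (this is where it enters), finitely many $V_{p_1},\dots,V_{p_m}$ cover $X$, and I fix a subordinate partition of unity $\{\rho_k\}_{k=1}^m$ with $\supp\rho_k\subset V_{p_k}$.

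Given $\alpha\in\Omega^1$, on $V_{p_k}$ I write $\alpha=\sum_{i=1}^7 a^k_i\,dh^{p_k}_i$ with $a^k_i\in C^\infty(V_{p_k})$, which is possible since the $dh^{p_k}_i$ form a coframe there. Each $\rho_k a^k_i$ is supported in the closed set $\supp\rho_k\subset V_{p_k}$, so it extends by zero to some $\tilde a^k_i\in\Omega^0$, and one checks that $\tilde a^k_i\,dh^{p_k}_i$ and $\rho_k a^k_i\,dx^{p_k}_i$ agree as $1$-forms on all of $X$ (both vanish off $\supp\rho_k$ and coincide on $V_{p_k}$). Summing over $i$ gives $\sum_i\tilde a^k_i\,dh^{p_k}_i=\rho_k\alpha$ on $X$, hence $\alpha=\sum_k\rho_k\alpha=\sum_{k=1}^m\sum_{i=1}^7\tilde a^k_i\,dh^{p_k}_i\in S$, a representation with $N=7m$.

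I do not expect a genuine obstacle; the only point needing a little care is the support bookkeeping in the patching step — choosing each $\chi_p$ to equal $1$ on a neighbourhood of $p$ so that $dh^p_i$ really equals $dx^p_i$ there, and checking that $\rho_k a^k_i$ extends smoothly by zero. As an alternative that also removes the compactness hypothesis, one may invoke a Whitney embedding $\iota\colon X\hookrightarrow\mathbb{R}^N$: the coordinate restrictions $\iota^*y_1,\dots,\iota^*y_N$ lie in $\Omega^0$, the differentials $d(\iota^*y_j)$ span $T^*_xX$ at every point (as $\iota$ is an immersion), so the surjective bundle map $\underline{\mathbb{R}}^N\to T^*X$, $(b_j)_j\mapsto\sum_j b_j\,d(\iota^*y_j)$, admits a smooth splitting, producing $b_j\in\Omega^0$ with $\alpha=\sum_{j=1}^N b_j\,d(\iota^*y_j)$.
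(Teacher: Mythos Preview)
Your proof is correct and follows essentially the same route as the paper: globalize local coordinates via cutoff functions, write $\alpha$ against the resulting coframe on each patch, and glue with a partition of unity using compactness of $X$. The paper multiplies both the coordinate functions and the coefficient functions by the cutoff before invoking the partition of unity, whereas you cut off only the coordinates and then apply the partition of unity to the coefficients; this is a cosmetic reorganization of the same argument. Your additional Whitney-embedding argument is a genuine alternative not present in the paper, and it has the pleasant feature of dispensing with compactness.
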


\begin{proof}
Take any 1-form $\alpha \in \Omega^1$. We first show that for any $x\in X$, 
there exists an open neighborhood $U_x$ of $x$ 
and smooth functions 
$\{ \widetilde f_{x, j}^{1}, \widetilde f_{x, j}^{2} \}_{j=1}^{7}$ on $X$ such that 
\begin{align} \label{eq:alpha local}
\alpha |_{U_x}= \sum ^{7}_{j=1} \widetilde f_{x,j}^{1} \ d \widetilde f_{x,j}^{2}|_{U_x}. 
\end{align}
Indeed, take any local coordinates $(V, (x^1, \cdots, x^7))$ of $x$ and set 
$$
\alpha|_{V}=\sum^{7}_{j=1}\alpha _{j}dx^{j}. 
$$
We can take a cutoff function $h$ such that 
$h$ has compact support in $V$ and $h=1$ on an open neighborhood $U_x$ of $x$. 
Then setting 
$\widetilde f_{x,j}^1= h \alpha_{j}$ and $\widetilde f_{x,j}^2= h x_j$, 
which are smooth functions on $X$, we obtain \eqref{eq:alpha local}. 

Since $\{ U_x \}_{x \in X}$ is an open cover of $X$ and $X$ is compact, 
there exists $x_1, \cdots, x_N \in X$ such that $\{ U_{x_p} \}_{p=1}^N$ covers $X$. 
Denote by $\{ h_p \}_{p=1}^N$ the partition of unity subordinate to $\{ U_{x_p} \}_{p=1}^N$. 
Set 
$$
f_{p,j}^{1}= h_{p} \widetilde {f}_{x_p, j}^1 \qquad 
f_{p,j}^{2}= \widetilde f_{x_p, j}^{2}. 
$$
Then we have $\alpha =\sum ^{N}_{p=1}\sum ^{7}_{j=1}f_{p,j}^{1}df_{p,j}^{2}$. 
Indeed, take any $x \in X$. 
We may assume that 
$x \in U_{x_1} \cap \cdots \cap U_{x_k}$ 
and $x \not\in U_p$ for $p=k+1, \cdots, N$. 
Then 
$\sum ^{7}_{j=1} \left(\widetilde f_{x_p,j}^{1} \ d \widetilde f_{x_p, j}^{2} 
\right)_x = \alpha_x$ 
for $p=1, \cdots, k$ by \eqref{eq:alpha local} and 
$h_p (x)=0$ for $p=k+1, \cdots, N$. Hence 
$$
\sum ^{N}_{p=1}\sum ^{7}_{j=1} \left(f_{p,j}^{1}df_{p,j}^{2} \right)_x
= 
\sum ^{k}_{p=1} h_p (x) \sum ^{7}_{j=1} 
\left( \widetilde f_{x_p,j}^{1} \ d \widetilde f_{x_p, j}^{2} \right)_x
= 
\sum ^{k}_{p=1} h_p (x) \alpha_x 
= 
\sum ^{N}_{p=1} h_p (x) \alpha_x
=
\alpha_x. 
$$
\end{proof}

Denote by $Z^6$ the space of closed 6-forms on $X$. 
Define a map $\iota_{Z^6}: Z^6 \to \Lambda^2 \fg_U^*$ by 
$$
\iota_{Z^6}(\xi) (f_1,f_2)
= \int_X \xi \wedge \frac{1}{2} (f_1 d f_2 -f_2 d f_1)
= \int_X \xi \wedge f_1 d f_2
$$
for $\xi \in Z^6$ and $f_1,f_2 \in \i \Omega^0 = \fg_U$.

\begin{theorem} \label{thm:mmm}
Define a $\cG_U$-invariant map $\nu: \cA_0 \to \Lambda^2 \fg_U^*$ by 
$$
\nu (\n)= \iota_{Z^6} (\i \cF_{G_2}(\n)). 
$$
Then we have 
$d \langle \nu, p \rangle = i(p^*) \Theta$
for any $p \in \Lambda^2 \fg_U$. 
\end{theorem}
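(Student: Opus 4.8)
The plan is to verify the two defining conditions of a multi-moment map from Definition~\ref{def:mmm}: $\cG_U$-equivariance and the differential identity $d\langle \nu, p\rangle = i(p^*)\Theta$. Since $\cG_U$ is abelian, equivariance just means $\cG_U$-invariance, which is immediate from the $\cG_U$-invariance of the curvature $F_\n$ (hence of $\cF_{G_2}(\n)$, which is built only from $F_\n$ and the fixed data $*\varphi$) together with the fact that $\iota_{Z^6}$ does not see the connection. So the real content is the differential identity.

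First I would unwind both sides for a decomposable $p = \i f_1 \wedge \i f_2 \in \Lambda^2 \fg_U$ with $f_1, f_2 \in \i\Omega^0$; by bilinearity and Lemma~\ref{lem:omega1 fn} it suffices to treat this case (indeed, the fundamental vector fields are $f_j^* = \i\,df_j$ viewed as constant vector fields on the affine space $\cA_0$, so $p^* = \i df_1 \wedge \i df_2$ as a bivector of constant vector fields, and the needed forms $b\wedge *\varphi$ spanning the relevant space arise from exact $df$'s by that lemma — actually here we only need decomposables since $\Lambda^2\fg_U$ is spanned by them). On the right-hand side, $i(p^*)\Theta$ is the function on $\cA_0$ given at $\n$ by $\Theta_\n(\i df_1, \i df_2, \cdot)$ contracted appropriately; with the explicit formula for $\Theta$ this is $\alpha \mapsto \i\int_X \i df_1 \wedge \i df_2 \wedge \alpha \wedge(\tfrac12 F_\n^2 + *\varphi)$, a 1-form on $\cA_0$. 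On the left-hand side, $\langle\nu,p\rangle(\n) = \int_X \i\cF_{G_2}(\n)\wedge f_1\,df_2$ after pairing $\iota_{Z^6}(\i\cF_{G_2}(\n))$ with $p$, and I would differentiate this in the direction $\alpha \in \i\Omega^1 = T_\n\cA_0$, using $\tfrac{d}{dt}F_{\n+t\alpha}\big|_{t=0} = d\alpha$ and hence $\tfrac{d}{dt}\big(\tfrac16 F^3 + F\wedge*\varphi\big)\big|_{t=0} = d\alpha\wedge(\tfrac12 F_\n^2 + *\varphi)$.

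Then the two sides are compared by an integration by parts. The derivative of $\langle\nu,p\rangle$ is $\i\int_X \big(d\alpha\wedge(\tfrac12 F_\n^2 + *\varphi)\big)\wedge f_1\,df_2$; I would move the $d$ off $\alpha$ using that $\tfrac12 F_\n^2 + *\varphi$ is closed ($F_\n$ closed and $d*\varphi=0$ by the coclosedness hypothesis) and that $d(f_1\,df_2) = df_1\wedge df_2$, so that on integrating by parts the boundary term vanishes ($X$ compact without boundary) and one picks up exactly $\pm\i\int_X \alpha\wedge df_1\wedge df_2\wedge(\tfrac12 F_\n^2+*\varphi)$. Matching signs and the ordering of the wedge factors against the formula for $i(p^*)\Theta$ above finishes the identification.

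The main obstacle I expect is bookkeeping rather than conceptual: getting the sign and the placement of the $\i$ factors consistent between the pairing conventions in Definition~\ref{def:mmm}, the definition of $\iota_{Z^6}$ (note the symmetrization $\tfrac12(f_1\,df_2 - f_2\,df_1)$ collapses to $f_1\,df_2$ modulo an exact form, which is where closedness of $\i\cF_{G_2}(\n)$ is silently used), and the contraction convention $i(p^*)c = \sum_j c(u_j^*,v_j^*,\cdot)$ for a bivector. I would also note in passing that $\cF_{G_2}(\n)$ is indeed closed — $d\big(\tfrac16 F_\n^3\big) = 0$ since $dF_\n=0$, and $d(F_\n\wedge*\varphi) = F_\n\wedge d*\varphi = 0$ — so that $\i\cF_{G_2}(\n) \in Z^6$ and $\nu$ is well defined, and this closedness is exactly what makes the $\iota_{Z^6}$ formula independent of the antisymmetrization choice. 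Everything else is the direct computation sketched above, with Lemma~\ref{lem:3-form closed} guaranteeing that $i(p^*)\Theta$ is a legitimate (closed-form-compatible) object to match against.
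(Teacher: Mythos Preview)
Your approach is essentially the paper's: compute the fundamental vector field $f^*=df$ for $f\in\i\Omega^0$, write out $\Theta_\n(df_1,df_2,\alpha)$, integrate by parts using $d\big(\tfrac12 F_\n^2+*\varphi\big)=0$, and recognise the result as the linearisation of $\i\cF_{G_2}$ paired via $\iota_{Z^6}$. The paper runs the computation starting from $\Theta$ and landing on $d\langle\nu,p\rangle$; you compute both sides separately and match, but the content is the same integration-by-parts step.

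Two minor points. First, your $\i$ bookkeeping is inconsistent: with $f_1,f_2\in\i\Omega^0=\fg_U$ one has $p=f_1\wedge f_2$ and $f_j^*=df_j$ (not $\i\,df_j$), so your formula for $i(p^*)\Theta$ carries two spurious factors of $\i$. You flag this as the main obstacle, and once you fix the convention the signs line up exactly as in the paper. Second, Lemma~\ref{lem:omega1 fn} is not needed in the proof itself; as you note parenthetically, decomposables already span $\Lambda^2\fg_U$. (The lemma is used only afterwards, to see that $\iota_{Z^6}$ is injective and hence $\nu^{-1}(0)=\cF_{G_2}^{-1}(0)$.)
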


Since we assume that $d * \varphi =0$, 
we see that $\i \cF_{G_2}(\n) \in Z^6$ for any $\n \in \cA_0$. 
By Lemma \ref{lem:omega1 fn}, $\iota_{Z^6}$ is injective. 
Hence we have $\nu^{-1} (0)= \cF_{G_2}^{-1} (0)$. 
In this sense, we can regard the dDT equation as the zero of a multi-moment map. 

\begin{proof}
First note that the vector field $f^*$ generated by $f \in \i \Omega^0 = \fg_U$ 
is given by 
$$
f^*_\n 
= \left. \frac{d}{dt} (e^{t f})^* \n \right|_{t=0} 
= \left. \frac{d}{dt} \left( \n + e^{-t f} d e^{t f} \cdot \id_L \right) \right|_{t=0} 
= df 
$$
at $\n \in \cA_0$. 
Hence for any $f_1, f_2 \in \i \Omega^0 = \fg_U$ and $\alpha \in \i \Omega^1 = T_\n \cA_0$, 
we have 
\begin{align}
&\Theta_\n ((f_1^*)_\n, (f_2^*)_\n, \alpha) \\
=&
\i \int_X d f_1 \wedge d f_2 \wedge \alpha \wedge \left ( \frac{1}{2} F_\n^2 + * \varphi \right)\\
=&
\i \int_X f_1 d f_2 \wedge d\alpha \wedge \left ( \frac{1}{2} F_\n^2 + * \varphi \right)
=
\i \int_X f_1 d f_2 \wedge (d \cF_{G_2})_\n (\alpha), 
\end{align}
where $(d \cF_{G_2})_\n: \i \Omega^1 \to \i \Omega^6$ is the linearization of  
$\cF_{G_2}$ at $\n \in \cA_0$. 
Hence we obtain 
\begin{align}
\Theta_\n ((f_1^*)_\n, (f_2^*)_\n, \alpha) 
=&
\iota_{Z^6} (\i (d \cF_{G_2})_\n (\alpha)) (f_1, f_2)\\
=&
\left. \frac{d}{d t} \iota_{Z^6} (\i \cF_{G_2} (\n+t \alpha \cdot \id_L)) \right|_{t=0} (f_1, f_2)
= 
(d \langle \nu, f_1 \wedge f_2 \rangle)_\n (\alpha). 
\end{align}
\end{proof}

In the dHYM case, the ``$\cJ$ functional" defined 
in \cite[Remark 2.15]{collins2021moment} or \cite[Lemma 2.6 (ii)]{collins2021survey} 
is convex along geodesics and the critical points are solutions of the dHYM equation. 
Hence it plays an important role in the existence problem. 

In the dDT case, there is a functional whose critical points are dDT connections. 
See Section \ref{sec:KL func}. 
However, no metric has yet been found that makes the functional convex along geodesics. 
Since no such results have been found for associative submanifolds, 
it might be difficult to relate the functional to the existence problem. 

However, as we see in the next section, 
we have an observation as in the case of instanton Floer homology for 3-manifolds 
by using the functional in Section \ref{sec:KL func}. 
We might develop the theory like instanton Floer homology 
using dDT connections.

\section{Gradient flow of the Karigiannis-Leung functional} \label{sec:grad}

It is known that 
the gradient flow equation of the Chern-Simons functional
on an oriented 3-manifold $X^3$ agrees with the ASD equation on $\rl \times X^3$. 
See for example \cite[Section 2.5.3]{Donaldson2002}. 
This is an important observation in instanton Floer homology for 3-manifolds. 
We show that there is an analogous relation between 
dDT equations for $G_2$- and ${\rm Spin}(7)$-manifolds.

Let $X^7$ be a 7-manifold with a $G_2$-structure $\varphi$ and 
$(L,h) \to X^7$ be a smooth complex Hermitian line bundle over $X^7$. 
Let $\{ \n_t \}_{t \in \rl}$ be a family of Hermitian connections of $(L,h) \to X^7$. 
We identify this with a connection $\widetilde \n$ of 
$\pi^*L \to \rl \times X^7$, where $\pi: \rl \times X^7 \to X^7$ is the projection. 
If we set 
$$
\n_t=\n_0+ \i a_t \cdot \id_L, 
$$
where $a_t \in \Omega^1(X^7)$, 
we have 
$\widetilde \n = \pi^* \n_0 + \i \pi^* a_t \cdot \id_{\pi^* L}$
and 
the curvature $F_{\widetilde \n}$ of $\widetilde \n$ is given by 
$$
F_{\widetilde \n} = \sqrt{-1} dt \wedge \frac{\partial \pi^* a_t}{\partial t} + \pi^* F_{\n_t}.  
$$

\subsection{The ${\rm Spin}(7)$-dDT condition on $\rl \times X^7$} \label{sec:Spin7 cyl}
The product $\rl \times X^7$ admits a canonical ${\rm Spin}(7)$-structure. 
We write down the condition that 
$F_{\widetilde \n}$ is a ${\rm Spin}(7)$-dDT connection, 
a dDT connection for a manifold with a ${\rm Spin}(7)$-structure. 
For simplicity, set 
$$
\dot a_t:=\frac{\partial \pi^* a_t}{\partial t}, \qquad E_t:= - \sqrt{-1} \pi^* F_{\n_t}. 
$$

\begin{lemma}
The connection $\widetilde \n$ is a ${\rm Spin}(7)$-dDT connection if and only if 
\begin{align}
-* \varphi \wedge E_t + \frac{1}{6} E_t^3
- \left( 1-\frac{1}{2} *(\varphi \wedge E_t^2) \right) * \dot a_t 
+ * (\dot a_t \wedge E_t \wedge \varphi) \wedge * E_t &=0 \label{eq:Spin7cyl 1}\\
\frac{1}{2} \varphi \wedge * E_t^2 - \dot a_t \wedge E_t \wedge \varphi &=0. \label{eq:Spin7cyl 2}
\end{align}
\end{lemma}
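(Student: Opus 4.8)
The plan is to unwind the definition of a $\mathrm{Spin}(7)$-dDT connection on the product $\rl\times X^7$ in terms of the data $(\dot a_t, E_t)$ on the slice $X^7$, and then split the resulting $7$-form equation into its $dt$-component and its $X^7$-tangential component. First I would recall the defining equation for a $\mathrm{Spin}(7)$-dDT connection on an $8$-manifold $(Y^8,\Psi)$ with $\mathrm{Spin}(7)$-form $\Psi$: following \cite{lee2009geometric, kawai2021FM} it takes the schematic shape
\begin{equation*}
F_{\widetilde\n}\wedge\Psi + \tfrac{1}{6}F_{\widetilde\n}^3 = 0
\end{equation*}
(with the appropriate signs and scalar normalisation matching the paper's sign convention), which is an identity in $\i\,\Omega^6(Y^8)$. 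On $Y=\rl\times X^7$ the canonical $\mathrm{Spin}(7)$-structure is built from $\varphi$ and $*\varphi$ via $\Psi = dt\wedge\varphi + *\varphi$ (again up to the paper's signs), so both $\Psi$ and the curvature decompose into a ``$dt\wedge(\,\cdot\,)$'' part and a ``pure $X^7$'' part.

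The computational core is to substitute $F_{\widetilde\n} = \i\,dt\wedge\dot a_t + \i\,\pi^*F_{\n_t} = \i\,dt\wedge\dot a_t - dt\wedge 0 \cdots$; more precisely, using $E_t = -\i\,\pi^* F_{\n_t}$ we write $F_{\widetilde\n} = \i\,dt\wedge\dot a_t + \i\, E_t/\i = \i\,dt\wedge\dot a_t + \i E_t \cdot(\i)^{-1}$, i.e. in the normalisation of the lemma the relevant real $2$-form is $dt\wedge\dot a_t + E_t$ on the nose. Then I would expand $F_{\widetilde\n}\wedge\Psi$ and $F_{\widetilde\n}^3$, collecting terms with a $dt$ factor and terms without. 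The terms without $dt$ assemble into a $6$-form on $X^7$; the terms with $dt$ assemble into $dt$ wedged with a $5$-form on $X^7$. Crucially, one needs the identity relating the $8$-dimensional Hodge star to the $7$-dimensional one on a Riemannian product, namely $*_8(dt\wedge\beta) = (-1)^{?}*_7\beta$ and $*_8\gamma = dt\wedge *_7\gamma$ for forms $\beta,\gamma$ pulled back from $X^7$; this is what converts expressions like $*_8(\varphi\wedge E_t^2)$ and $*_8 E_t$ appearing implicitly in the $\mathrm{Spin}(7)$-dDT equation into the $*=*_7$ operators written in \eqref{eq:Spin7cyl 1}--\eqref{eq:Spin7cyl 2}. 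Equating the $dt$-free part to zero gives one of the two displayed equations and equating the coefficient of $dt$ to zero gives the other; matching which is which is just bookkeeping of degrees (the $dt$-free part is a $6$-form, giving the first equation, and the $dt$-part strips a $dt$ to leave the second, a $5$-form condition — or vice versa depending on how the $\mathrm{Spin}(7)$-dDT equation is packaged).

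The main obstacle I expect is purely bookkeeping: getting every sign right. The $\mathrm{Spin}(7)$-dDT equation in the references is itself a repackaging of a polynomial in $F$ and $\Psi$ whose Hodge-dual form involves several sign conventions (orientation of $\rl\times X^7$, the sign in $\Psi = \pm dt\wedge\varphi + *\varphi$, the sign in $*_8$ versus $*_7$, and the $\i$-normalisations), and the cross-terms $*(\dot a_t\wedge E_t\wedge\varphi)\wedge *E_t$ and $*(\varphi\wedge E_t^2)$ in \eqref{eq:Spin7cyl 1} arise from contracting an $E_t$ out of the cubic term $F_{\widetilde\n}^3$ and then re-dualising, so one must be careful that the ``scalar'' $\tfrac{1}{2}*(\varphi\wedge E_t^2)$ multiplying $*\dot a_t$ really is a function and not a higher form. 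I would handle this by working in a local orthonormal coframe adapted to the splitting $T^*(\rl\times X^7) = \rl\,dt\oplus\pi^*T^*X^7$, where all these wedge/star manipulations reduce to combinatorics on multi-indices, and then verify the two equations termwise; there is no conceptual difficulty once the $\mathrm{Spin}(7)$-dDT equation from \cite{kawai2021FM} is written out explicitly. Finally I would note that the ``if and only if'' is immediate from this term-by-term matching, since a $7$-form on $\rl\times X^7$ vanishes precisely when both its $dt$-free and its $dt$-containing components vanish.
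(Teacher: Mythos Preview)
Your proposal has a genuine gap at the very first step: the $\mathrm{Spin}(7)$-dDT equation from \cite{kawai2021FM} is \emph{not} a single $6$-form equation of the schematic shape $F_{\widetilde\n}\wedge\Psi+\tfrac{1}{6}F_{\widetilde\n}^3=0$. In the sense of \cite[Definition 1.3]{kawai2021FM}, the condition on an $8$-manifold is a pair of projection conditions, namely $\pi^2_7\bigl(F_{\widetilde\n}+\tfrac{1}{6}*_8F_{\widetilde\n}^3\bigr)=0$ and $\pi^4_7\bigl(F_{\widetilde\n}^2\bigr)=0$. The paper invokes \cite[Lemma 3.4]{kawai2021FM}, which says that on the cylinder these two conditions are equivalent to
\[
\Bigl\langle F_{\widetilde\n}+\tfrac{1}{6}*_8F_{\widetilde\n}^3,\ dt\wedge b+i(b^\sharp)\varphi\Bigr\rangle=0,
\qquad
\Bigl\langle F_{\widetilde\n}^2,\ dt\wedge i(b^\sharp)*\varphi - b\wedge\varphi\Bigr\rangle=0
\]
for all $b\in\Omega^1(X^7)$, using that $dt\wedge b+i(b^\sharp)\varphi$ and $dt\wedge i(b^\sharp)*\varphi - b\wedge\varphi$ parameterise $\Lambda^2_7$ and $\Lambda^4_7$ on $\rl\times X^7$, respectively. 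The rest of the proof expands these two pairings (not a single form) and rewrites each, for arbitrary $b$, as the Hodge pairing of $*b$ with the $6$-forms in \eqref{eq:Spin7cyl 1} and \eqref{eq:Spin7cyl 2}.

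A quick degree check already shows your splitting cannot produce the lemma: decomposing a single $6$-form on $\rl\times X^7$ into its $dt$-free and $dt$-containing parts yields one $6$-form and one $5$-form on $X^7$, whereas both \eqref{eq:Spin7cyl 1} and \eqref{eq:Spin7cyl 2} are $6$-forms. In particular the cross-term $*(\dot a_t\wedge E_t\wedge\varphi)\wedge *E_t$ in \eqref{eq:Spin7cyl 1} does not come from ``contracting an $E_t$ out of the cubic term'' in a $dt$-decomposition; it arises when the inner product with $i(b^\sharp)\varphi$ is unwound via $i(b^\sharp)E_t=-*(b\wedge *E_t)$. The fix is to start from the correct two-equation formulation of the $\mathrm{Spin}(7)$-dDT condition and the cylinder description of $\Lambda^2_7$, $\Lambda^4_7$ from \cite{kawai2021FM}; once you have that input, the remaining manipulations are indeed bookkeeping of the kind you describe.
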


\begin{proof}
Denote by $*_8$ and $*=*_7$ the Hodge star operators on $\rl \times X^7$ and $X^7$, respectively. 
Then, $\widetilde \n$ is a ${\rm Spin}(7)$-dDT connection 
(in the sense of \cite[Definition 1.3]{kawai2021FM})
if and only if 
\begin{align}\label{eq:Spin7cyl 3}
\left\langle F_{\widetilde \n} + \frac{1}{6} *_8 F_{\widetilde \n}^3, \ dt \wedge b + i(b^\sharp) \varphi \right\rangle=0,\qquad
\left\langle F_{\widetilde \n}^2, \ dt \wedge i(b^\sharp) * \varphi - b \wedge \varphi \right\rangle=0
\end{align}
for any $b \in \Omega^1(X^7)$ by \cite[Lemma 3.4]{kawai2021FM}. 
Since 
$$
\frac{1}{6} *_8 F_{\widetilde \n}^3 
= - \frac{\sqrt{-1}}{6} *_8 (3 dt \wedge \dot a_t \wedge E_t^2 +E_t^3)
= \sqrt{-1} \left( -\frac{1}{2} * \left( \dot a_t \wedge E_t^2 \right) - \frac{1}{6} dt \wedge * E_t^3 \right), 
$$
\eqref{eq:Spin7cyl 3} is equivalent to 
\begin{align}
\left\langle \dot a_t - \frac{1}{6} * E_t^3, b \right\rangle
+ \left\langle E_t - \frac{1}{2} * \left( \dot a_t \wedge E_t^2 \right), i(b^\sharp) \varphi \right \rangle =0, 
\label{eq:Spin7cyl 1 0}\\
\left\langle 2 \dot a_t \wedge E_t, i(b^\sharp) * \varphi \right\rangle
- \left\langle E_t^2, b \wedge \varphi \right \rangle =0.  \label{eq:Spin7cyl 2 0}
\end{align}
We compute 
$$
\left\langle E_t, i(b^\sharp) \varphi \right \rangle 
= * (E_t \wedge * (i(b^\sharp) \varphi))
= * (E_t \wedge b \wedge * \varphi) = \langle * \varphi \wedge E_t, * b \rangle
$$
and 
\begin{align}
\left\langle - \frac{1}{2} * \left( \dot a_t \wedge E_t^2 \right), i(b^\sharp) \varphi \right \rangle 
=&
- \frac{1}{2} * \left( \dot a_t \wedge E_t^2 \wedge i(b^\sharp) \varphi \right) \\
=&
- \frac{1}{2} * \left( i(b^\sharp) (\dot a_t \wedge E_t^2) \wedge  \varphi \right) \\
=&
- \frac{1}{2} * \left( E_t^2 \wedge  \varphi \right) \cdot \langle \dot a_t, b \rangle 
+ 
* \left( \dot a_t \wedge E_t \wedge (i(b^\sharp) E_t)  \wedge  \varphi \right). 
\end{align}
Since 
$i(b^\sharp) E_t = -* (b \wedge * E_t)$, we have 
\begin{align}
* \left( \dot a_t \wedge E_t \wedge (i(b^\sharp) E_t)  \wedge  \varphi \right)
=
\langle \dot a_t \wedge E_t \wedge \varphi, b \wedge * E_t \rangle
=
- \langle * (\dot a_t \wedge E_t \wedge \varphi) \wedge * E_t, * b \rangle. 
\end{align}
Then, we see that \eqref{eq:Spin7cyl 1 0} is equivalent to \eqref{eq:Spin7cyl 1}. 
Similarly, since 
\begin{align}
\left\langle 2 \dot a_t \wedge E_t, i(b^\sharp) * \varphi \right\rangle
&=- 2 * (\dot a_t \wedge E_t \wedge b \wedge \varphi)
= 2 \langle \dot a_t \wedge E_t \wedge \varphi, * b \rangle, \\
- \left\langle E_t^2, b \wedge \varphi \right \rangle 
&=- *(b \wedge \varphi \wedge * E_t^2) 
=- \langle \varphi \wedge * E_t^2, * b \rangle, 
\end{align}
we see that \eqref{eq:Spin7cyl 2 0} is equivalent to \eqref{eq:Spin7cyl 2}. 
\end{proof}

Hence, eliminating $* (\dot a_t \wedge E_t \wedge \varphi)$ from 
\eqref{eq:Spin7cyl 1} by \eqref{eq:Spin7cyl 2}, we obtain 
\begin{align} \label{eq:Spin7cyl 4}
-* \varphi \wedge E_t + \frac{1}{6} E_t^3
+ \frac{1}{2} *(\varphi \wedge * E_t^2) \wedge * E_t 
=
\left( 1-\frac{1}{2} *(\varphi \wedge E_t^2) \right) * \dot a_t. 
\end{align}

\begin{remark} \label{rem:equiv Spin7}
If $ 1-*(\varphi \wedge E_t^2)/2 \neq 0$, 
\eqref{eq:Spin7cyl 1} and \eqref{eq:Spin7cyl 2} are equivalent to \eqref{eq:Spin7cyl 4} 
by Proposition \ref{prop:equiv Spin7 ptwise}. 
\end{remark}

\subsection{The Karigiannis-Leung functional} \label{sec:KL func}
Karigiannis and Leung \cite{karigiannis2009hodge} introduced the functional whose critical points are dDT connections. 
We first review it. 

Let $X^7$ be a compact 7-manifold with a coclosed $G_2$-structure $\varphi$ 
($d * \varphi =0$) 
and let $(L, h) \to X^7$ be a smooth complex Hermitian line bundle. 
Denote by $\cA_0$ the space of Hermitian connections of $(L,h)$. 
Define a 1-form $\Theta$ on $\cA_0$ by 
$$
\Theta_\n (\i b) = \int_X \i b \wedge \left( \frac{1}{6} F_\nabla^3 + F_\nabla \wedge * \varphi  \right) 
$$
for $\n \in \cA_0$ and $\i b \in \i \Omega^1 = T_\n \cA_0$. Then we see that 
$\Theta_\n =0$ if and only if $\n$ is a dDT connection. 
We can show that $\Theta$ is closed as in the proof of Lemma \ref{lem:3-form closed}. 
Since $\cA_0$ is contractible, there exists $\cF: \cA_0 \to \rl$ such that $d \cF = \Theta$. 
Hence we see that dDT connections are critical points of $\cF$. \\

Now, we study the relation between ${\rm Spin}(7)$-dDT connections on $\mathbb{R} \times X^7$ and 
the Karigiannis-Leung functional $\cF$. 
Set 
$$
\cA_{ac}:=\left\{ \n \in \cA_0 \; \middle| \; 
1+\frac{1}{2} *(\varphi \wedge F_\n^2) > 0 \right\}. 
$$
This type of the subset is also considered in the dHYM case. 
For example, see the survey article \cite[Definition 2.1]{collins2021survey}. 
By the mirror of the associator equality in \cite[Theorem 5.1]{kawai2021mirror}, 
it will be natural to call a Hermitian connection $\n$ satisfying 
$1 + *(\varphi \wedge F_\n^2)/2 > 0$ {\bf almost calibrated} as in the dHYM case. 

Define a metric $\cG$ on $\cA_{ac}$ by 
$$
\cG_\n (\sqrt{-1} a, \sqrt{-1} b) = \int_X \langle a, b \rangle_\n 
\left( 1+\frac{1}{2} *(\varphi \wedge F_\n^2) \right) \vol
$$
where $\n \in \cA_{ac}, \sqrt{-1} a, \sqrt{-1} b \in \i \Omega^1 = T_\n \cA_{ac}$, 
$\vol$ is the induced volume form from $\varphi$, 
and $\langle \cdot, \cdot \rangle_\n$ is the induced metric 
on the space of differential forms 
from $(\id_{TX} + (- \i F_\n)^\sharp)^* \varphi$. 
Here, $(- \i F_\n)^\sharp$ is an endomorphism of $TX$ 
defined by 
$g((- \i F_\n)^\sharp (u),v) = - \i F_\n (u,v)$ for $u,v \in TX$, 
where $g$ is the induced metric (on $TX$) from $\varphi$. 
Note that 
$(- \i F_\n)^\sharp$ is skew-symmetric with respect to $g$. 
Explicitly, if we denote by $g_\n$ the induced metric (on $TX$) from 
$(\id_{TX} + (- \i F_\n)^\sharp)^* \varphi$, 
we have $g_\n=(\id_{TX} + (- \i F_\n)^\sharp)^* g$ 
and $\langle \cdot, \cdot \rangle_\n$ is the induced metric from $g_\n$. 

The following is the main theorem of this paper. 

\begin{theorem} \label{thm:gradient flow}
The gradient flow equation of $\cF$ with respect to $\cG$ on $\cA_{ac}$ 
agrees with the ${\rm Spin}(7)$-dDT equation on $\mathbb{R} \times X^7$. 
\end{theorem}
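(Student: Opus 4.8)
The plan is to compute the gradient flow equation of $\cF$ with respect to $\cG$ directly and match it term-by-term with the reduced ${\rm Spin}(7)$-dDT equation \eqref{eq:Spin7cyl 4} derived in Section \ref{sec:Spin7 cyl}. First I would recall that, by construction, $d\cF = \Theta$, so the differential of $\cF$ at $\n$ in the direction $\sqrt{-1}\,b \in \i\Omega^1 = T_\n \cA_0$ is $\Theta_\n(\sqrt{-1}\,b) = \int_X \sqrt{-1}\,b \wedge \left(\frac{1}{6}F_\n^3 + F_\n \wedge *\varphi\right)$. Writing $E := -\sqrt{-1}\,F_\n$ as in Section \ref{sec:Spin7 cyl}, this becomes $-\int_X b \wedge \left(\frac{1}{6}E^3 + E \wedge *\varphi\right) = -\int_X \langle *\left(\frac{1}{6}E^3 + E\wedge *\varphi\right),\, b\rangle \,\vol$ after rewriting the wedge pairing as an $L^2$ inner product via the Hodge star. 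Here I would need the pointwise identity $b \wedge *\varphi \wedge E^2 = $ (a combination of $*(\varphi\wedge E^2)$ and $*(\varphi\wedge *E^2)\wedge *E$ type terms) paired with $b$; this is exactly the same algebra that produced the left-hand side of \eqref{eq:Spin7cyl 4}, so I would import it from the computation in the lemma of Section \ref{sec:Spin7 cyl} rather than redoing it.

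Next I would unwind the definition of the gradient. The gradient $\nabla^\cG \cF$ at $\n$ is the unique $\sqrt{-1}\,v \in \i\Omega^1$ with $\cG_\n(\sqrt{-1}\,v, \sqrt{-1}\,b) = (d\cF)_\n(\sqrt{-1}\,b)$ for all $b$. Since $\cG_\n(\sqrt{-1}\,v,\sqrt{-1}\,b) = \int_X \langle v, b\rangle_\n\left(1 + \frac{1}{2}*(\varphi\wedge F_\n^2)\right)\vol$, and since the $\langle\,,\,\rangle_\n$ metric is the one induced by $g_\n = (\id_{TX} + (-\i F_\n)^\sharp)^* g$, I would need to relate $\langle v, b\rangle_\n$ back to the background $\langle v, b\rangle = \langle v,b\rangle_g$. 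The key linear-algebra input is the pointwise relation between the volume forms and the induced forms for $g_\n$ versus $g$ — concretely, identities of the shape $\langle a, b\rangle_\n \,\vol_\n = \langle(\text{some endomorphism built from }F_\n)\,a,\, b\rangle\,\vol$ — which should already be implicit in \cite[Theorem 5.1]{kawai2021mirror} (the mirror associator equality) and the factor $1 + \frac{1}{2}*(\varphi\wedge F_\n^2)$ appearing in $\cG$. Plugging this in, the equation $\cG_\n(\sqrt{-1}\,v, \sqrt{-1}\,b) = (d\cF)_\n(\sqrt{-1}\,b)$ for all $b$ forces $\left(1 + \frac{1}{2}*(\varphi\wedge F_\n^2)\right) v^\flat$ (measured in $g$) to equal $-\left(*\varphi\wedge E + \frac{1}{6}E^3 + \cdots\right)$ — precisely the structure of \eqref{eq:Spin7cyl 4} with the right-hand coefficient $1 - \frac{1}{2}*(\varphi\wedge E^2)$ matching $1 + \frac{1}{2}*(\varphi\wedge F_\n^2)$ under $E = -\i F_\n$ (note $E^2 = -F_\n^2$). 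The gradient flow equation $\dot\n_t = -\nabla^\cG\cF(\n_t)$, i.e. $\dot a_t = -v$, then reads exactly as \eqref{eq:Spin7cyl 4}, which by Remark \ref{rem:equiv Spin7} is equivalent on $\cA_{ac}$ (where the coefficient is positive, hence nonzero) to the full ${\rm Spin}(7)$-dDT system \eqref{eq:Spin7cyl 1}–\eqref{eq:Spin7cyl 2} for $F_{\widetilde\n}$ on $\rl\times X^7$.

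Finally, I would need to reconcile the sign and orientation conventions: the identification of $\{\n_t\}$ with $\widetilde\n$ on $\rl\times X^7$ gives $F_{\widetilde\n} = \sqrt{-1}\,dt\wedge\dot a_t + \pi^* F_{\n_t}$, so $E_t = -\sqrt{-1}\,\pi^* F_{\n_t}$ and the $dt$-component is $\sqrt{-1}\,\dot a_t$; I should check that the gradient flow sign ($\dot a_t = -v$ versus $+v$) is the one that makes the $dt$-component of the ${\rm Spin}(7)$ equation come out with the correct sign, possibly absorbing a sign into the choice of ${\rm Spin}(7)$-structure on the cylinder or into $\cF$ (which is only defined up to sign and constant). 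I expect the main obstacle to be the pointwise metric identity relating $\langle\,,\,\rangle_\n\,\vol_\n$ to $\langle\,,\,\rangle_g\,\vol_g$ with the explicit factor $1 + \frac{1}{2}*(\varphi\wedge F_\n^2)$ and the correct lower-order terms $\frac{1}{6}E^3 + \frac{1}{2}*(\varphi\wedge *E^2)\wedge *E$: this is a genuine $G_2$-representation-theory computation, and getting every coefficient to line up with \eqref{eq:Spin7cyl 4} is where the real work lies. Everything else is bookkeeping once that identity is in hand.
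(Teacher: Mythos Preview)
Your overall plan matches the paper's exactly: compute $(d\cF)_\n$, identify the $\cG$-gradient, recognize the flow equation as \eqref{eq:Spin7cyl 4}, and invoke Remark~\ref{rem:equiv Spin7}. The observation $1+\tfrac{1}{2}*(\varphi\wedge F_\n^2)=1-\tfrac{1}{2}*(\varphi\wedge E^2)$ is also right.

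The gap is at the step you yourself flag as the main obstacle, and your description of how that step should go is off. The extra term $\tfrac{1}{2}*(\varphi\wedge *E^2)\wedge *E$ does \emph{not} come from rewriting $(d\cF)_\n$: one simply has $(d\cF)_\n(\i b)=\int_X\langle b,*\xi\rangle\,\vol$ with $\xi=\tfrac{1}{6}E^3-E\wedge*\varphi$, and there is no identity of the form ``$b\wedge*\varphi\wedge E^2=\ldots$'' to invoke (that expression is a $9$-form on a $7$-manifold). Nor can the term be imported from the lemma of Section~\ref{sec:Spin7 cyl}, where it arises by substituting \eqref{eq:Spin7cyl 2} into \eqref{eq:Spin7cyl 1} to eliminate $\dot a_t$; that algebra involves $\dot a_t$ and is not available on the $d\cF$ side. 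In the paper the term is produced entirely by the passage from $\langle\cdot,\cdot\rangle$ to $\langle\cdot,\cdot\rangle_\n$, via a dedicated pointwise identity (Proposition~\ref{prop:pb xi}): for $\xi=-*\varphi\wedge E+\tfrac{1}{6}E^3$ one has $(I-(E^\sharp)^2)^*\,*\xi=*\bigl(\xi+\tfrac{1}{2}*(\varphi\wedge *E^2)\wedge *E\bigr)$. Combined with the factorization $I-(E^\sharp)^2={}^t(I+E^\sharp)(I+E^\sharp)$ (from skew-symmetry of $E^\sharp$), this turns $\langle b,*\xi\rangle$ into $\langle b,\eta_\n\rangle_\n$ with $\eta_\n=*\bigl(\xi+\tfrac{1}{2}*(\varphi\wedge *E^2)\wedge *E\bigr)$, because $g_\n=(I+E^\sharp)^*g$. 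The mirror associator equality of \cite{kawai2021mirror} is not the input here; Proposition~\ref{prop:pb xi} is, and its proof is a direct $G_2$-algebra computation. Finally, the paper takes the positive gradient flow $\dot a_t=+v$, so no sign absorption is needed.
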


\begin{proof}
We first deduce the gradient flow equation and compare it with the computation 
in Section \ref{sec:Spin7 cyl}. 
Take any $\n \in \cA_{ac}$ and $b \in \Omega^1$. 
Set 
$$
E_\n=-\i F_\n \in \Omega^2. 
$$ 
Denote by $\langle \cdot, \cdot \rangle$ the induced metric 
on the space of differential forms from $\varphi$. 
Then we compute  
\begin{align} \label{eq:gradient flow 1}
(d \cF)_\n  (\i b) 
= \int_X \i b \wedge \left( \frac{1}{6} F_\nabla^3 + F_\nabla \wedge * \varphi  \right)
= \int_X \left \langle b, *\left( \frac{1}{6} E_\nabla^3 - E_\nabla \wedge * \varphi \right) \right \rangle \vol.
\end{align}
By Proposition \ref{prop:pb xi}, we have 
$$
*\left( \frac{1}{6} E_\nabla^3 - E_\nabla \wedge * \varphi \right)
=
\left(\left(\id_{TX} - (E_\n^\sharp)^2 \right)^{-1} \right)^* \eta_\n, 
$$
where 
\begin{align}
\eta_\n
=
* \left( -* \varphi \wedge E_\n + \frac{1}{6} E_\n^3 + \frac{1}{2} *(\varphi \wedge * E_\n^2) \wedge * E_\n \right) 
\in \Omega^1. 
\end{align}
Since 
$$
\id_{TX} - (E_\n^\sharp)^2 
= (\id_{TX} - E_\n^\sharp) (\id_{TX} + E_\n^\sharp) 
= {}^t (\id_{TX} + E_\n^\sharp) (\id_{TX} + E_\n^\sharp), 
$$
where ${}^t (\id_{TX} + E_\n^\sharp)$ is the transpose of $\id_{TX} + E_\n^\sharp$ with respect to $g$, 
we have 
\begin{equation} \label{eq:gradient flow 2}
\begin{split}
\left \langle b, *\left( \frac{1}{6} E_\nabla^3 - E_\nabla \wedge * \varphi \right) \right \rangle
=&
\left \langle b, \left(\left(\id_{TX} - (E_\n^\sharp)^2 \right)^{-1} \right)^* \eta_\n \right \rangle \\
=&
\left \langle \left( \left(\id_{TX} + E_\n^\sharp \right)^{-1} \right)^* b, 
\left( \left(\id_{TX} + E_\n^\sharp \right)^{-1} \right)^* \eta_\n \right \rangle
=
\left \langle b, \eta_\n \right \rangle_\n. 
\end{split}
\end{equation}
Then by \eqref{eq:gradient flow 1} and \eqref{eq:gradient flow 2}, 
the gradient vector field of $\cF$ with respect to $\cG$ is given by 
$$
\cA_{ac} \ni \n \mapsto 
\frac{\i \eta_\n} {1-\frac{1}{2} *(\varphi \wedge E_\n^2)} \in \i \Omega^1. 
$$
Thus a family $\{ \n_t \}_{t \in \mathbb{R}} \subset \cA_{ac}$ 
satisfies the gradient flow of $\cF$ with respect to $\cG$ if and only if 
\begin{align} \label{eq:gradient flow 3}
\dot a_t 
= 
\frac{\eta_{\n_t}} {1-\frac{1}{2} *(\varphi \wedge E_{\n_t}^2)}
=
\frac{* \left( -* \varphi \wedge E_t + \frac{1}{6} E_t^3 + \frac{1}{2} *(\varphi \wedge * E_t^2) \wedge * E_t \right)}{1-\frac{1}{2} *(\varphi \wedge E_t^2)},  
\end{align}
where 
$\n_t=\n_0+\i a_t \cdot \id_L$, $a_t \in \Omega^1$, $\dot a_t = \partial a_t/ \partial t$ 
and 
$E_t=E_{\n_t}=-\i F_{\n_t}$. 
Then we see that \eqref{eq:gradient flow 3} is equivalent to \eqref{eq:Spin7cyl 4}. 
By Remark \ref{rem:equiv Spin7}, this is equivalent to the ${\rm Spin}(7)$-dDT equation on $\mathbb{R} \times X^7$. 
\end{proof}

By Theorem \ref{thm:gradient flow}, we will have to consider the deformation theory 
of the ${\rm Spin}(7)$-dDT connections on $\mathbb{R} \times X^7$ next 
for the analogue of instanton Floer homology for 3-manifolds. 
Deformations of ${\rm Spin}(7)$-dDT connections on a compact manifold 
with a ${\rm Spin}(7)$-structure are studied in \cite[Theorem 1.2]{kawai2021deformationSpin(7)}, 
but there are some technical assumptions. 
We will have to deal with more technical issues, including these, 
to develop the deformation theory on a cylinder.

\appendix
\section{Algebraic Computations}
In this appendix, we give some algebraic computations needed in the proof of 
Theorem \ref{thm:gradient flow}. 

Set $V =\mathbb{R}^7$ and let $g$ be the standard inner product on $V$. 
Denote by $*$ the standard Hodge star operator on $V$. 
For a 2-form $F \in \Lambda^2 V^*$, define $F^\sharp \in {\rm End} (V)$ by 
\[
g(F^\sharp (u), v) = F(u, v)
\]
for $u,v \in V$. 
Then, 
$F^\sharp$ is skew-symmetric, and hence, 
$\det(I + F^\sharp) >0$, where $I$ is the identity matrix. 
We also have 
$$
\det (I - (F^\sharp)^2) 
= \det (I+F^\sharp) \det (I-F^\sharp)
= \det (I+F^\sharp) \det (I+{}^t F^\sharp)
= (\det (I+F^\sharp))^2
>0,  
$$
where we ${}^t F^\sharp$ is the transpose of $F^\sharp$ with respect to $g$.  
Define a 3-form $\varphi \in \Lambda^3 V^*$ by 
$$
\varphi = e^{123} + e^{145} + e^{167} + e^{246} - e^{257} - e^{347} - e^{356}, 
$$
where $\{ e_i \}_{i=1}^7$ 
is a standard oriented basis of $V$ with the dual basis $\{ e^i \}_{i=1}^7$ of $V^\ast$ 
and $e^{i_1 \dots i_k}$ is short for $e^{i_1} \wedge \cdots \wedge e^{i_k}$. 
The stabilizer of $\varphi$ is known to be the exceptional $14$-dimensional simple Lie group 
$G_2$. The elements of $G_2$ preserve the standard inner product $g$ and volume form $\vol$. 
The group $G_2$ acts canonically on $\Lambda^k V^*$, and 
$\Lambda^2 V^*$ is decomposed as 
$\Lambda^2 V^* = \Lambda^2_7 V^* \oplus \Lambda^2_{14} V^*$, 
where $\Lambda^2_\ell V^*$ is a $\ell$-dimensional irreducible subrepresentation of $G_2$ 
in $\Lambda^2 V^*$. 
For more details, see for example \cite[Section 2.2]{kawai2021mirror}. 
Set 
$$
F=F_7 + F_{14} =i(u)\varphi + F_{14} \in \Lambda^2_7 V^* \oplus \Lambda^2_{14} V^*
$$
for $u \in V$.

\begin{proposition}\label{prop:pb xi}
For a 2-form $F \in \Lambda^2 V^*$, 
set $\xi= -* \varphi \wedge F + F^3/6  \in \Lambda^6 V^*$. Then we have 
\begin{align}
(I - (F^\sharp)^2)^* * \xi
=
* \left( \xi + \frac{1}{2} *(\varphi \wedge * F^2) \wedge * F \right). 
\end{align}
\end{proposition}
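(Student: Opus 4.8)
My plan is to read the asserted equation as a polynomial identity in the entries of $F^\sharp$ and to verify it by separating homogeneous components in $F$. Since $\xi=-*\varphi\wedge F+\frac16 F^3$ splits into a part linear in $F$ and a part cubic in $F$, while the endomorphism pullback $(I-(F^\sharp)^2)^*$ raises the degree in $F$ by $0$ or $2$, and the correction term $\frac12 *(\varphi\wedge *F^2)\wedge *F$ is cubic in $F$, both sides of the claimed equality are sums of homogeneous pieces of degrees $1$, $3$ and $5$ in $F$, and it suffices to match these three pieces. The degree-$1$ pieces are $*(-*\varphi\wedge F)$ on both sides, so they agree at once.

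For the degree-$5$ piece the only contribution is $-((F^\sharp)^2)^*\bigl(*(\frac16 F^3)\bigr)$ on the left, which I claim vanishes. Indeed, for every $v\in V$ one has $i(v)F=(F^\sharp v)^\flat$ and
\[
\bigl\langle *(\tfrac16 F^3),\, i(v)F\bigr\rangle\,\vol \;=\; \tfrac16\, i(v)F\wedge F^3 \;=\; \tfrac1{24}\, i(v)(F^4)\;=\;0,
\]
because $F^4\in\Lambda^8 V^*=\{0\}$. Hence the $1$-form $*(\frac16 F^3)$ annihilates $\mathrm{Im}(F^\sharp)$, and since $(F^\sharp)^2 V\subseteq\mathrm{Im}(F^\sharp)$ we conclude $((F^\sharp)^2)^*\bigl(*(\frac16 F^3)\bigr)=0$.

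The degree-$3$ piece is the heart of the matter: after cancelling the common $\frac16 *(F^3)$ it becomes
\[
\bigl((F^\sharp)^2\bigr)^*\bigl(*(*\varphi\wedge F)\bigr)\;=\;\tfrac12\,*\bigl(*(\varphi\wedge *F^2)\wedge *F\bigr).
\]
I would simplify each side with standard Hodge and $G_2$ contraction identities on $V$. Writing $F=i(u)\varphi+F_{14}$ as in the setup, one has $*(F\wedge *\varphi)=3\,u^\flat$ (using $F_{14}\wedge *\varphi=0$ and $i(u)\varphi\wedge *\varphi=3\,i(u)\vol$), so, because $(F^\sharp)^2$ is $g$-symmetric, the left-hand side equals $3\bigl((F^\sharp)^2 u\bigr)^\flat$. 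On the right, for the $1$-form $\mu:=*(\varphi\wedge *F^2)$ one has $*(\mu\wedge *F)=-\,i(\mu^\sharp)F=-\bigl(F^\sharp\mu^\sharp\bigr)^\flat$, so the right-hand side equals $-\frac12\bigl(F^\sharp\mu^\sharp\bigr)^\flat$. Thus the degree-$3$ identity is equivalent to $F^\sharp\mu^\sharp=-6\,(F^\sharp)^2 u$, for which it is enough to prove the degree-two identity $\mu^\sharp=-6\,F^\sharp u$.

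To prove that last identity I expand $F^2=(i(u)\varphi)^2+2\,i(u)\varphi\wedge F_{14}+F_{14}^2$. The two ``pure'' terms contribute nothing: $*(\varphi\wedge *(F_{14}^2))=0$ because $F_{14}\wedge *\varphi=0$ forces $F_{14}^2\wedge i(v)*\varphi=0$ for all $v$ (using $F_{14}^2\wedge *\varphi\in\Lambda^8 V^*=\{0\}$), and $*(\varphi\wedge *((i(u)\varphi)^2))=0$ by a parallel argument after rewriting $(i(u)\varphi)^2=2\,i(u)*(i(u)\varphi)$ via $\varphi\wedge i(u)\varphi=2\,*(i(u)\varphi)$ and then using $i(u)i(u)\varphi=0$. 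The surviving cross term $\mu=2\,*(\varphi\wedge *(i(u)\varphi\wedge F_{14}))$ is then collapsed, using the elementary contraction identities for a $G_2$-structure ($i(w)\varphi\wedge *\varphi=3\,i(w)\vol$, $\varphi\wedge i(w)*\varphi=4\,i(w)\vol$, the standard relations between $*$, wedging by $w^\flat$ and $i(w)$, and $\varphi\wedge F_{14}=-*F_{14}$), to a universal multiple of $i(u)F_{14}=i(u)F=(F^\sharp u)^\flat$; fixing the constant, e.g.\ by evaluating on $F=\lambda_1 e^{12}+\lambda_2 e^{34}$ with $\varphi$ the standard form, yields $\mu^\sharp=-6\,F^\sharp u$ and completes the proof. (An alternative is to run the whole computation in a $g$-orthonormal frame in which $F^\sharp$ is block-diagonal, so that $(I-(F^\sharp)^2)^*$ is a diagonal rescaling; that bypasses the degree bookkeeping at the cost of carrying a general $\varphi$ compatible with $g$.) The step I expect to be the real obstacle is precisely this last computation: the structure is essentially forced, but getting every sign and normalization right — the Hodge-star signs in seven dimensions and the exact numerical constants in the $G_2$ contraction identities under the paper's sign convention for $\varphi$, in particular pinning down the overall factor $-6$ — is where the care is needed.
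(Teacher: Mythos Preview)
Your proposal is correct and reaches the same two key identities as the paper, but the organisation is genuinely different. The paper does not decompose by degree in $F$; instead it computes $((F^\sharp)^2)^*\ast\xi$ in one stroke via an index manipulation, obtaining the closed formula
\[
((F^\sharp)^2)^*\ast\xi \;=\; \ast\!\bigl(\ast(\ast\xi\wedge\ast F)\wedge\ast F\bigr),
\]
and then plugs in $\xi$ and simplifies. Your degree-$5$ vanishing and your degree-$3$ reduction amount to exactly the two facts the paper isolates as its Lemma: $(\ast F^3)\wedge\ast F=0$ and $\ast(\varphi\wedge\ast F^2)=-6\,i(u)F$. The trade-off is that your degree bookkeeping makes the structure of the identity more transparent, while the paper's single formula avoids the case split entirely.

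For the crucial identity $\ast(\varphi\wedge\ast F^2)=-6\,i(u)F$, the paper's argument is shorter than your type-splitting route and requires no representation-theoretic input or test evaluation: pair with an arbitrary $v^\flat$, use $\ast(v^\flat\wedge\varphi)=-\,i(v)\!\ast\!\varphi$ together with $\ast\varphi\wedge F^2\in\Lambda^8V^\ast=0$ to rewrite $v^\flat\wedge\varphi\wedge\ast F^2$ as $2\,i(v)F\wedge F\wedge\ast\varphi$, and then invoke $F\wedge\ast\varphi=3\ast u^\flat$. In particular, no decomposition $F=F_7+F_{14}$ is needed, and the constant $-6$ drops out directly rather than being fixed on an example. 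Your cross-term step would in fact yield the same constant by the same manipulation once you carry it out (your ``universal multiple'' is forced by the computation itself, not merely by equivariance), so the sketchiness there is harmless but also unnecessary.
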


\begin{proof}
Since $(I - (F^\sharp)^2)^* * \xi = * \xi - ((F^\sharp)^2)^* * \xi$, 
we only have to compute $((F^\sharp)^2)^* * \xi$. 
Set 
$$F_{i j} =F(e_i, e_j).$$ 
We have $F^\sharp=\sum_{i,j}F_{i j} e^i \otimes e_j$, which implies that 
$(F^\sharp)^2 = \sum_{i, j, k} F_{i j} F_{j k} e^i \otimes e_k$. 
Then we compute 
\begin{align}
((F^\sharp)^2)^* * \xi 
= \sum_{i, j, k} F_{i j} F_{j k} * \xi (e_k) \cdot e^i 
= -\sum_j \langle i(e_j)F, * \xi \rangle \cdot i(e_j) F. 
\end{align}
Since 
\begin{align}
\langle i(e_j)F, * \xi \rangle
=&
* \left( * \xi \wedge * (i(e_j)F) \right)
=
-* \left( * \xi \wedge e^j \wedge * F \right)
=
\langle e^j, *(* \xi \wedge * F) \rangle, \\
i(e_j) F
=& -* (e^j \wedge * F), 
\end{align}
we have 
\begin{align} \label{eq:pb xi 1}
((F^\sharp)^2)^* * \xi 
= * \left( * \left( * \xi  \wedge * F \right) \wedge *F \right)
= * \left( * \left( * \left( -*\varphi \wedge F + \frac{F^3}{6} \right)  \wedge * F \right) \wedge *F \right). 
\end{align}
\begin{lemma} \label{lem:pb xi 1}
We have 
\begin{align}
(*F^3) \wedge *F &=0, \\
*(\varphi \wedge * F^2) &= -6 i(u) F. 
\end{align}
\end{lemma}

\begin{proof}
We can prove the first equation as in \cite[Lemma C.2]{kawai2020deformation}. 
For any $v \in V$, set 
$$
v^\flat = g(v, \cdot) \in V^*.
$$
We compute 
\begin{align}
v^\flat \wedge (*F^3) \wedge *F 
= (*F^3) \wedge * (i(v) F)
=F^3 \wedge i(v) F
= i(v)(F^4/4)=0, 
\end{align}
which implies the first equation. 
Similarly, for any $v \in V$, we have 
$$
v^\flat \wedge \varphi \wedge * F^2
=* (v^\flat \wedge \varphi) \wedge F^2
=-i(v) * \varphi \wedge F^2
=* \varphi \wedge i(v) F^2
= 2 i(v) F \wedge F \wedge * \varphi. 
$$
Since 
$F \wedge * \varphi=i(u) \varphi \wedge * \varphi = 3 * u^\flat$ 
by for example \cite[Lemma B.1]{kawai2020deformation}, we obtain 
$$
v^\flat \wedge \varphi \wedge * F^2
=
6 \langle u^\flat, i(v) F \rangle \vol 
=
6 \langle v^\flat \wedge u^\flat, F \rangle \vol 
=
-6 \langle v^\flat, i(u) F \rangle \vol,  
$$
which implies the second equation. 
\end{proof}

Then by \eqref{eq:pb xi 1}, Lemma \ref{lem:pb xi 1} 
and the equation 
$F \wedge * \varphi=i(u) \varphi \wedge * \varphi = 3 * u^\flat$, we obtain 
\begin{align}
((F^\sharp)^2)^* * \xi 
=& * \left( * \left( * \left( -*\varphi \wedge F \right)  \wedge * F \right) \wedge *F \right)\\
=& * \left( * \left( -3 u^\flat  \wedge * F \right) \wedge *F \right)\\
=& 3 * \left( (i(u) F) \wedge * F \right)
= - \frac{1}{2} * \left( *(\varphi \wedge * F^2) \wedge * F \right) 
\end{align}
and the proof is completed. 
\end{proof}

\begin{proposition} \label{prop:equiv Spin7 ptwise}
For a 1-form $a \in V^*$ and a 2-form $F \in \Lambda^2 V^*$ 
such that $1-*(\varphi \wedge F^2)/2 \neq 0$, 
\begin{align}
-* \varphi \wedge F + \frac{1}{6} F^3
- \left( 1-\frac{1}{2} *(\varphi \wedge F^2) \right) * a 
+ * (a \wedge F \wedge \varphi) \wedge * F &=0, \label{eq:appSpin7cyl 1}\\
\frac{1}{2} \varphi \wedge * F^2 - a \wedge F \wedge \varphi &=0 \label{eq:appSpin7cyl 2}
\end{align}
if and only if 
\begin{align} \label{eq:appSpin7cyl 4}
-* \varphi \wedge F + \frac{1}{6} F^3
+ \frac{1}{2} *(\varphi \wedge * F^2) \wedge * F 
=
\left( 1-\frac{1}{2} *(\varphi \wedge F^2) \right) * a. 
\end{align}
\end{proposition}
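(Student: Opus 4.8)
The plan is to handle the two implications separately. The forward direction, \eqref{eq:appSpin7cyl 1}--\eqref{eq:appSpin7cyl 2} $\Rightarrow$ \eqref{eq:appSpin7cyl 4}, is just the elimination already performed in Section~\ref{sec:Spin7 cyl}: from \eqref{eq:appSpin7cyl 2} one gets $*(a\wedge F\wedge\varphi)=\tfrac12*(\varphi\wedge *F^2)$, and substituting this into the last term of \eqref{eq:appSpin7cyl 1} and rearranging gives \eqref{eq:appSpin7cyl 4}. For the converse I would use the standing hypothesis $c:=1-\tfrac12*(\varphi\wedge F^2)\neq 0$ to solve \eqref{eq:appSpin7cyl 4} for $a$: applying $*$ (and $*^2=\mathrm{id}$ on $1$- and $6$-forms of $V=\mathbb{R}^7$) gives $c\,a=*\bigl(-*\varphi\wedge F+\tfrac16 F^3+\tfrac12*(\varphi\wedge *F^2)\wedge *F\bigr)$, which by Proposition~\ref{prop:pb xi} equals $(I-(F^\sharp)^2)^*(*\xi)$ with $\xi=-*\varphi\wedge F+\tfrac16F^3$; in particular $a$ is uniquely determined by $F$.

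It then suffices to check that this $a$ satisfies \eqref{eq:appSpin7cyl 2}, since the substitution in the forward direction is reversible: once \eqref{eq:appSpin7cyl 2} holds, replacing $\tfrac12*(\varphi\wedge *F^2)$ by $*(a\wedge F\wedge\varphi)$ in \eqref{eq:appSpin7cyl 4} is exactly \eqref{eq:appSpin7cyl 1}. So everything reduces to the pointwise claim that the $1$-form $a$ extracted from \eqref{eq:appSpin7cyl 4} automatically obeys \eqref{eq:appSpin7cyl 2}. I do not see a shortcut here: subtracting \eqref{eq:appSpin7cyl 1} from \eqref{eq:appSpin7cyl 4} only gives $\bigl(\tfrac12*(\varphi\wedge *F^2)-*(a\wedge F\wedge\varphi)\bigr)\wedge *F=0$, which does not force the two terms to coincide, because $F^\sharp$ is skew-symmetric on the odd-dimensional $V$ and hence always has a nonzero kernel.

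Hence the real work is the pointwise verification of \eqref{eq:appSpin7cyl 2}, which I would carry out in the $G_2$-decomposition $F=i(u)\varphi+F_{14}\in\Lambda^2_7V^*\oplus\Lambda^2_{14}V^*$ after applying $*$ to turn \eqref{eq:appSpin7cyl 2} into a $1$-form identity. The ingredients are Lemma~\ref{lem:pb xi 1} (so that $\tfrac12*(\varphi\wedge *F^2)=-3\,i(u)F=-3\,i(u)F_{14}$, and $(*F^3)\wedge *F=0$), the identity $F\wedge *\varphi=3*u^\flat$, the standard relations $\alpha\wedge\varphi=2*\alpha$ on $\Lambda^2_7V^*$ and $\beta\wedge\varphi=-*\beta$ on $\Lambda^2_{14}V^*$ (in the convention of \cite{kawai2021mirror}), and the contraction rules $i(v)F=-*(v^\flat\wedge *F)$, $i(a^\sharp)i(u)\varphi=-i(u)i(a^\sharp)\varphi$, $i(u)i(u)\varphi=0$. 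Using these, both $*(a\wedge F\wedge\varphi)$ and $-3\,i(u)F_{14}$ get rewritten explicitly in terms of $u$ and $F_{14}$; one then inserts $c\,a=(I-(F^\sharp)^2)^*(*\xi)$, expands $*\xi$ via $*(F^3)$ and $F\wedge *\varphi$, and splits $F^3$ and $(F^\sharp)^2$ into their pieces homogeneous in $i(u)\varphi$ and in $F_{14}$. The main obstacle is organising this expansion so that the cubic-in-$F$ contributions cancel against the $-*\varphi\wedge F$ contributions and the coefficients of $i(u)F_{14}$ match; the remainder is routine Hodge-star and interior-product bookkeeping.
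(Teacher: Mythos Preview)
Your plan is correct and matches the paper's approach: both directions are handled as you describe, and the real content is the pointwise implication \eqref{eq:appSpin7cyl 4} $\Rightarrow$ \eqref{eq:appSpin7cyl 2}, proved by a $G_2$-decomposition computation with exactly the ingredients you list. The paper organises this slightly differently by isolating the computation as Lemma~\ref{lem:lem gfl}, an identity
\[
*\eta \wedge F \wedge \varphi = \tfrac{1}{2}\Bigl(1-\tfrac{1}{2}*(\varphi\wedge F^2)\Bigr)\,\varphi\wedge *F^2
\]
valid for \emph{all} $F$ (where $\eta$ is the left side of \eqref{eq:appSpin7cyl 4}); then \eqref{eq:appSpin7cyl 4} reads $*\eta=c\,a$ and \eqref{eq:appSpin7cyl 2} drops out after dividing by $c$. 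Your route through Proposition~\ref{prop:pb xi} to rewrite $c\,a=(I-(F^\sharp)^2)^*(*\xi)$ is correct but an unnecessary detour---the paper works directly with $*\eta$ and never unpacks the $(F^\sharp)^2$ action---so you would likely find the bookkeeping lighter if you drop that step and compute $*\eta\wedge F\wedge\varphi$ directly as $J_1+J_2$ in the $7\oplus 14$ splitting.
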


\begin{proof}
Eliminating $a \wedge F \wedge \varphi$ from \eqref{eq:appSpin7cyl 1} by \eqref{eq:appSpin7cyl 2}, 
we obtain \eqref{eq:appSpin7cyl 4}. 
Conversely, \eqref{eq:appSpin7cyl 4} implies \eqref{eq:appSpin7cyl 2} by the following Lemma \ref{lem:lem gfl}. 
By \eqref{eq:appSpin7cyl 4}, the left hand side of \eqref{eq:appSpin7cyl 1} is computed as 
\begin{align}
- \frac{1}{2} *(\varphi \wedge * F^2) \wedge * F + * (a \wedge F \wedge \varphi) \wedge * F
=
* \left(- \frac{1}{2} \varphi \wedge * F^2 + a \wedge F \wedge \varphi \right) \wedge * F,  
\end{align}
which vanishes by \eqref{eq:appSpin7cyl 2}.
\end{proof}

\begin{lemma} \label{lem:lem gfl}
For any 2-form $F \in \Lambda^2 V^*$, we have 
$$
* \left( -* \varphi \wedge F + \frac{1}{6} F^3
+ \frac{1}{2} *(\varphi \wedge * F^2) \wedge * F \right) \wedge F \wedge \varphi
=
\frac{1}{2} \left(1-\frac{1}{2} *(\varphi \wedge F^2) \right) \varphi \wedge * F^2. 
$$
\end{lemma}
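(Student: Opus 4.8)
The plan is to reduce the stated wedge identity to a pointwise identity among $1$-forms (hence vectors) using the results already established in this appendix, and then to verify that identity by a $G_2$-equivariant computation.

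First, write $\xi = -*\varphi\wedge F + \tfrac16 F^3$ and $c = *(\varphi\wedge F^2)$. The $6$-form inside the Hodge star on the left of the lemma is exactly $\xi + \tfrac12 *(\varphi\wedge *F^2)\wedge *F$, so by Proposition~\ref{prop:pb xi} its Hodge dual is $(I - (F^\sharp)^2)^* *\xi$; hence the left-hand side equals $\bigl[(I - (F^\sharp)^2)^* *\xi\bigr]\wedge F\wedge\varphi$. I would then apply $*$ to both sides. On the right, Lemma~\ref{lem:pb xi 1} gives $*\bigl(\tfrac12(1-\tfrac12 c)\,\varphi\wedge *F^2\bigr) = -3(1-\tfrac12 c)\,i(u)F$. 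On the left, writing $\beta = (I - (F^\sharp)^2)^* *\xi$ and using $*(\beta\wedge\tau) = (-1)^{|\tau|}i(\beta^\sharp)(*\tau)$ together with the $G_2$-decomposition $F\wedge\varphi = 2*F_7 - *F_{14}$ (so $*(F\wedge\varphi) = 2F_7 - F_{14}$), one gets $*(\beta\wedge F\wedge\varphi) = -i(\beta^\sharp)(2F_7 - F_{14})$. Applying the musical isomorphism, using that $I - (F^\sharp)^2$ is $g$-symmetric and that $F_7^\sharp u = u\times u = 0$ (so $F^\sharp u = F_{14}^\sharp u$ and $2F_7^\sharp - F_{14}^\sharp = 3F_7^\sharp - F^\sharp$), the lemma reduces to the vector identity
\begin{align}
(F^\sharp - 3F_7^\sharp)\,(I - (F^\sharp)^2)\,(*\xi)^\sharp \;=\; -3\Bigl(1-\tfrac12 *(\varphi\wedge F^2)\Bigr)\,F_{14}^\sharp u .
\end{align}

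Finally I would evaluate both sides. Since $*\varphi\wedge F = i(u)\varphi\wedge *\varphi = 3*u^\flat$ (as used above), one has $*(-*\varphi\wedge F) = -3u^\flat$, hence $(*\xi)^\sharp = -3u + \tfrac16 (*(F^3))^\sharp$; the scalar is computed directly as $*(\varphi\wedge F^2) = 2|F_7|^2 - |F_{14}|^2 = 6|u|^2 - |F_{14}|^2$ (the cross term drops out because $F_7\perp F_{14}$). The contributions of $F_7$ everywhere are then controlled by $F_7^\sharp = u\times(\,\cdot\,)$ and the standard $G_2$ identity $(F_7^\sharp)^2 w = \langle u,w\rangle u - |u|^2 w$. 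The one ingredient with no comparably clean closed form is the cubic term $F^3$, and I expect this to be the main obstacle. I would handle it by writing $F = i(u)\varphi + F_{14}$, observing that both sides of the displayed vector identity are $G_2$-equivariant polynomials in $F$, reducing (by transitivity of $G_2$ on the unit sphere of $V$) to $u\in\mathbb{R}e_1$, and verifying the identity by a direct computation in the standard basis, using the $\mathrm{Stab}_{G_2}(e_1)=\mathrm{SU}(3)$-decomposition of $\Lambda^2_{14}$ to keep the case check manageable. Everything else is bookkeeping with the $G_2$ contraction identities already assembled in this appendix.
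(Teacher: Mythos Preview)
Your reduction is correct and takes a genuinely different route from the paper. The paper never invokes Proposition~\ref{prop:pb xi} here; instead it wedges both sides with an arbitrary $v^\flat$, splits the resulting $7$-form into $J_1$ (coming from $-*\varphi\wedge F+\tfrac16 F^3$) and $J_2$ (coming from $\tfrac12 *(\varphi\wedge *F^2)\wedge *F$), and expands each directly in the decomposition $F=F_7+F_{14}$ using identities such as $F_7^3=6|u|^2*u^\flat$. The cross terms $i(v)F_7\wedge F_{14}^3$ and $i(v)F_{14}\wedge F_7^2\wedge F_{14}$ cancel between $J_1$ and $J_2$, and what remains is $3(-1+\tfrac12 c)\langle v^\flat,i(u)F\rangle\,\vol$, which matches the right-hand side via $\varphi\wedge *F^2=-6*(i(u)F)$. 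Your recognition of the $1$-form on the left as $(I-(F^\sharp)^2)^**\xi$ is a clean structural observation the paper does not exploit, and your passage to a vector identity via $*(F\wedge\varphi)=2F_7-F_{14}$ and $F_7^\sharp u=0$ is correct (and survives the obvious sanity checks $F_{14}=0$ and $u=0$). What each approach buys: the paper works uniformly in $F_7,F_{14}$ without choosing a frame, at the cost of a page of explicit wedge algebra; yours front-loads the conceptual content and defers all the algebra to the normal-form check. The one caveat is that the step you call ``bookkeeping'' is where essentially all of the work lives: even after setting $u\in\mathbb{R}e_1$, the $\mathrm{SU}(3)$-decomposition of $\Lambda^2_{14}$ still leaves a polynomial identity (up to degree six in $F$) whose verification is comparable in length to the paper's direct expansion. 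The plan is sound, but that last step is the proof, not an afterthought.
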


\begin{proof}
Fix any $v \in V$ and set 
$$
J_1= v^\flat \wedge * \left( -* \varphi \wedge F + \frac{1}{6} F^3 \right) \wedge F \wedge \varphi, \qquad
J_2= v^\flat \wedge * \left( \frac{1}{2} *(\varphi \wedge * F^2) \wedge * F \right) \wedge F \wedge \varphi. 
$$
We compute $J_1$ and $J_2$. We have 
\begin{align}
J_1=&
*\left( i(v) \left( * \varphi \wedge F - \frac{1}{6} F^3 \right) \right) \wedge *(2F_7 -F_{14})\\
=&
i(v) \left( * \varphi \wedge F - \frac{1}{6} F^3 \right) \wedge (2F_7 -F_{14})
=
\left( -3 * (v^\flat \wedge u^\flat) - \frac{1}{2} i(v)F \wedge F^2 \right) \wedge (2F_7 -F_{14}),  
\end{align}
where we use $* \varphi \wedge F=3*u^\flat$. 
We also have 
$$
-3 * (v^\flat \wedge u^\flat) \wedge (2F_7 -F_{14})
=
-3 \langle v^\flat \wedge u^\flat, 2F_7 -F_{14} \rangle \vol 
= -3 \langle v^\flat, i(u)F \rangle \vol 
$$
as $i(u) F_7= i(u) i(u) \varphi=0$, and 
\begin{align}
&\left(- \frac{1}{2} i(v)F \wedge F^2 \right) \wedge (2F_7 -F_{14}) \\
=&
- \frac{1}{2} i(v)F \wedge (F_7^2+2F_7 \wedge F_{14} +F_{14}^2) \wedge (2F_7 -F_{14})\\
=&
- \frac{1}{2} \left(i(v)F_7 + i(v)F_{14} \right) \wedge (2 F_7^3+3 F_7^2 \wedge F_{14} - F_{14}^3)\\
=&
- \frac{1}{2} \left \{ i(v)F_7 \wedge (3 F_7^2 \wedge F_{14} - F_{14}^3) 
+ i(v)F_{14} \wedge (2 F_7^3+3 F_7^2 \wedge F_{14})
\right \}, 
\end{align}
where we use $i(v)F_7 \wedge F_7^3=i(v)(F_7^4/4)=0$ and $i(v)F_{14} \wedge F_{14}^3=i(v)(F_{14}^4/4)=0$. 
By \cite[(B.7)]{kawai2020deformation}, we have 
\begin{align} \label{eq:equiv Spin7 ptwise 1}
F_7^3=6|u|^2 * u^\flat. 
\end{align}
Then 
\begin{align}
3 i(v)F_7 \wedge F_7^2 \wedge F_{14}
=& i(v) F_7^3 \wedge F_{14}
=-6|u|^2 *(v^\flat \wedge u^\flat) \wedge F_{14}
=6|u|^2 \langle v^\flat, i(u) F \rangle \vol, \\
2 i(v)F_{14} \wedge F_7^3
=&
12|u|^2 i(v)F_{14} \wedge * u^\flat 
=
12|u|^2 \langle F_{14}, v^\flat \wedge u^\flat \rangle \vol 
=
- 12|u|^2 \langle v^\flat, i(u) F \rangle \vol.  
\end{align}
Hence we obtain 
\begin{align} \label{eq:equiv Spin7 ptwise 2}
J_1=
(-3+3|u|^2) \langle v^\flat, i(u) F \rangle \vol
+
\frac{1}{2} \left( i(v)F_7 \wedge F_{14}^3 -3 i(v)F_{14} \wedge F_7^2 \wedge F_{14} \right). \\
\end{align}

Next, we compute $J_2$. By Lemma \ref{lem:pb xi 1}, we have 
\begin{align}
J_2
=&
v^\flat \wedge * \left(-3 i(u)F \wedge * F \right) \wedge *(2F_7-F_{14}) \\
=&
3 * \left( i(u)F \wedge * F \right) \wedge *\left(i(v) (-2F_7+F_{14}) \right) 
=
3 i(u)F \wedge *F \wedge i(v) (-2F_7+F_{14}). 
\end{align}
Since 
\begin{align}
i(u)F \wedge *F
=& i(u)F_{14} \wedge \left( \frac{1}{2} F_7 \wedge \varphi - F_{14} \wedge \varphi \right)\\
=&
\frac{1}{2} \left( i(u) (F_{14} \wedge F_7 \wedge \varphi) -F_{14} \wedge F_7 \wedge i(u) \varphi \right)
- \frac{1}{2} i(u) F_{14}^2 \wedge \varphi \\
=&
- \frac{1}{2} F_7^2 \wedge F_{14} 
- \frac{1}{2} \left( i(u) (F_{14}^2 \wedge \varphi)-F_{14}^2 \wedge i(u) \varphi \right)\\
=&
\frac{1}{2} \left( |F_{14}|^2 * u^\flat - F_7^2 \wedge F_{14} + F_7 \wedge F_{14}^2 \right),  
\end{align}
we have 
\begin{align}
J_2=
\frac{3}{2} \left(- F_7^2 \wedge F_{14} + F_7 \wedge F_{14}^2 \right) \wedge i(v) (-2F_7+F_{14})
+
\frac{3}{2} |F_{14}|^2 * u^\flat \wedge i(v) (-2F_7+F_{14}). 
\end{align}
We compute 
\begin{align}
&\left(- F_7^2 \wedge F_{14} + F_7 \wedge F_{14}^2 \right) \wedge i(v) (-2F_7+F_{14}) \\
=&
2 i(v) F_7 \wedge F_7^2 \wedge F_{14} - 2 i(v) F_7 \wedge F_7 \wedge F_{14}^2 
- i(v) F_{14} \wedge F_7^2 \wedge F_{14} + i(v) F_{14} \wedge F_7 \wedge F_{14}^2. 
\end{align}
By \eqref{eq:equiv Spin7 ptwise 1}, it follows that 
$$
2 i(v) F_7 \wedge F_7^2 \wedge F_{14}
= \frac{2}{3} i(v) F_7^3 \wedge F_{14} = 4|u|^2 \langle v^\flat, i(u)F \rangle \vol. 
$$
Since 
$- 2 i(v) F_7 \wedge F_7 \wedge F_{14}^2
= -i(v) F_7^2 \wedge F_{14}^2 
= F_7^2 \wedge i(v) F_{14}^2 
= 2 i(v) F_{14} \wedge F_7^2 \wedge F_{14}$, 
we have 
$$
- 2 i(v) F_7 \wedge F_7 \wedge F_{14}^2 - i(v) F_{14} \wedge F_7^2 \wedge F_{14}
= i(v) F_{14} \wedge F_7^2 \wedge F_{14}. 
$$
We also have 
$$
i(v) F_{14} \wedge F_7 \wedge F_{14}^2 
= \frac{1}{3} i(v) F_{14}^3 \wedge F_7 
= - \frac{1}{3} F_{14}^3 \wedge i(v) F_7
$$
and 
$$
\frac{3}{2} |F_{14}|^2 * u^\flat \wedge i(v) (-2F_7+F_{14})
=
\frac{3}{2} |F_{14}|^2 \langle -2F_7+F_{14}, v^\flat \wedge u^\flat \rangle \vol
=
-\frac{3}{2} |F_{14}|^2 \langle v^\flat, i(u)F \rangle \vol. 
$$
Hence we obtain 
\begin{align} \label{eq:equiv Spin7 ptwise 3}
J_2= \left(6|u|^2-\frac{3}{2} |F_{14}|^2 \right) \langle v^\flat, i(u)F \rangle \vol
+\frac{3}{2} i(v) F_{14} \wedge F_7^2 \wedge F_{14}
-\frac{1}{2} i(v) F_{7} \wedge F_{14}^3. \\
\end{align}

Then by \eqref{eq:equiv Spin7 ptwise 2} and \eqref{eq:equiv Spin7 ptwise 3}, we obtain 
\begin{align}
J_1+J_2
= 3 \left(-1+3|u|^2-\frac{1}{2} |F_{14}|^2 \right) \langle v^\flat, i(u)F \rangle \vol 
= 3 \left(-1+\frac{1}{2} *(\varphi \wedge F^2) \right) \langle v^\flat, i(u)F \rangle \vol, 
\end{align}
where we use 
$*(\varphi \wedge F^2)
=* \left(F \wedge *(2F_7-F_{14}) \right) 
= 2|F_7|^2 - |F_{14}|^2 = 6|u|^2-|F_{14}|^2$ 
by \cite[Lemma B.1]{kawai2020deformation}. 
Then it follows that 
$$
* \left( -* \varphi \wedge F + \frac{1}{6} F^3
+ \frac{1}{2} *(\varphi \wedge * F^2) \wedge * F \right) \wedge F \wedge \varphi
=
3 \left(-1+\frac{1}{2} *(\varphi \wedge F^2) \right) *(i(u)F). 
$$
Since $\varphi \wedge * F^2 = -6 * (i(u)F)$ by Lemma \ref{lem:pb xi 1}, 
the proof is completed. 
\end{proof}


\bibliography{references}

\end{document}